\theoremstyle{plain}
\newtheorem*{main}{Main Theorem}
\newtheorem{thm}{Theorem}
\newtheorem{lm}[thm]{Lemma}
\newtheorem{prop}[thm]{Proposition}
\theoremstyle{definition}
\newtheorem{de}[thm]{Definition}
\newtheorem{ex}[thm]{Example}
\newtheorem{re}[thm]{Remark}
\DeclareMathOperator{\RR}{\mathbb{R}}
\DeclareMathOperator{\CC}{\mathbb{C}}
\DeclareMathOperator{\GL}{GL}
\DeclareMathOperator{\SL}{SL}
\DeclareMathOperator{\SU}{SU}
\DeclareMathOperator{\OO}{O}
\DeclareMathOperator{\SO}{SO}
\DeclareMathOperator{\g}{\mathfrak{g}}
\DeclareMathOperator{\gl}{\mathfrak{gl}}
\renewcommand{\k}{\kappa}
\renewcommand{\sl}{\mathfrak{sl}}
\DeclareMathOperator{\su}{\mathfrak{su}}
\renewcommand{\sp}{\mathfrak{sp}}
\DeclareMathOperator{\reg}{reg}
\DeclareMathOperator{\Tr}{Tr}
\renewcommand{\span}{\mbox{span}}
\DeclareMathOperator{\ad}{ad}
\DeclareMathOperator{\Sp}{Sp}
\DeclareMathOperator{\End}{End}
\newcommand{\diag}{\mathrm{diag}}
\begin{document}
\title{A note on ED degrees of group-stable subvarieties in polar representations} 
\author{Arthur Bik}
\address{Universit\"at Bern, Mathematisches Institut, Alpeneggstrasse 22,
3012 Bern}
\email{arthur.bik@math.unibe.ch}
\author{Jan Draisma}
\address{Universit\"at Bern, Mathematisches Institut, Sidlerstrasse 5,
3012 Bern, and Eindhoven University of Technology}
\email{jan.draisma@math.unibe.ch}
\begin{abstract}
In a recent paper, Drusvyatskiy, Lee, Ottaviani, and Thomas establish a ``transfer principle'' by means of which the Euclidean distance degree of an orthogonally-stable matrix variety can be computed from the Euclidean distance degree of its intersection with a linear subspace. We generalise this principle. 
\end{abstract}
\thanks{Both authors were partially supported by the NWO Vici grant
entitled {\em Stabilisation in Algebra and Geometry}.}
\maketitle

\section{Introduction} 

Fix a closed algebraic subvariety $X$ of a finite-dimensional complex vector space $V$ equipped with a non-degenerate symmetric bilinear form $\langle-|-\rangle\colon V\times V\rightarrow\CC$. Denote by $X^{\reg}$ the smooth locus in $X$.  Then for a sufficiently general {\em data point} $u \in V$ the number 
$$
\#\left\{\left.x \in X^{\reg} \right| u-x \perp T_x X\right\}
$$
of {\em ED critical points} for $u$ on $X$ is finite. Suppose that $V=\CC \otimes_{\RR} V_{\RR}$, the bilinear form is the complexification of a Euclidean inner product on $V_{\RR}$ and $X$ is the Zariski-closure of a real algebraic variety $X_{\RR}$ that has real smooth points, then this number is, for $u\in V$ sufficiently general, positive and independent of $u$ and is called the {\em Euclidean distance degree} (ED degree for short) of $X$ in $V$. See \cite{draisma-horobet-ottaviani-sturmfels-thomas}. Here, the ED degree counts the number of critical points in the smooth locus of $X$ of the distance function $d_u\colon X\rightarrow\CC$ sending $x \mapsto \langle u-x|u-x\rangle$.\bigskip

The goal of this note is to show that the ED degree of a variety $X$ with a suitable group action can sometimes be computed from that of a simpler variety $X_0$ obtained by slicing $X$ with a linear subspace of $V$.\bigskip

For the simplest example of this phenomenon, let $C\subseteq\CC^2$ be the unit circle with equation $x^2+y^2=1$ where $\CC^2$ is equipped with the standard form. The ED degree of $C$ equals $2$ and this is easily seen as follows. First, $C$ is $\OO_2$-stable where $\OO_2$ is the orthogonal group preserving the bilinear form. For all $u\in\CC^2$ and $g\in\OO_2$, the map $g$ restricts to a bijection between ED critical points on $C$ for $u$ and for $gu$. In particular, the number of ED critical points on $C$ for a sufficiently general point $u\in\CC^2$ equals that number for $gu$, for any choice of $g\in\OO_2$. We may assume that $u$ is not isotropic. Therefore, by choosing $g$ suitably, we may assume that $u$ lies on the horizontal axis. And then, since $u\not\perp T_p\OO_2p = T_pC$ for any point $p\in C$ not on the horizontal axis, the search for ED critical points is reduced to the search for such points on the intersection of $C$ with the horizontal axis. Clearly, both of the intersection points are critical.\bigskip

In the paper \cite{drusvyatskiy-lee-ottaviani-thomas}, a generalisation of this example is studied. They consider the
vector space $V=\CC^{n \times t}$ equipped with the trace bilinear form and with the group $G=\OO_n\times\OO_t$ acting by left and right multiplication. The variety $X$ is chosen as the Zariski-closure of an $(\OO_n(\RR)\times\OO_t(\RR))$-stable real algebraic variety $X_{\RR}$ in~$\RR^{n \times t}$. This ensures that $X$ is $G$-stable. The horizontal line is generalised to the $\min(n,t)$-dimensional space $V_0$ of diagonal matrices in $V$. They then prove that the ED degree of $X$ in $V$ equals the ED degree of $X_0:=X\cap V_0$ in $V_0$. In the paper, $X_0$ is defined in an {\em a priori} different manner, namely, as the Zariski-closure of the intersection of $X_{\RR}$ with $V_0$. That this is the same thing as the intersection of $X$ with $V_0$ is the content of \cite[Theorem 3.6]{drusvyatskiy-lee-ottaviani-thomas}, which is an application of the fact that the quotient map under a reductive (in fact, here finite) group sends closed, group-stable sets to closed sets.\bigskip

Note that, like the unit circle and the horizontal line from the first example, the variety $X$ and the subspace $V_0$ satisfy the following conditions:
\begin{itemize}
\item[(1)] For $v_0\in V_0$ sufficiently general, the vectorspace $V$ is the orthogonal direct sum of $V_0$ and $T_{v_0}Gv_0$. 
\item[(2)] The set $GX_0$ is dense in $X$. 
\end{itemize}
The tangent space $T_{v_0}Gv_0$ is equal to $\g v_0$ where
$\g$ is the Lie algebra of $G$, consists of all pairs
$(a,b)$ of skew-symmetric $n\times n$ and $t\times t$
matrices and acts by $(a,b)\cdot v=av-vb$ for all $v\in V$
and $(a,b)\in\g$. From the fact that the bilinear form
$\langle-|-\rangle$ is $G$-invariant, it follows that
$\langle(a,b)v|w\rangle+\langle v|(a,b)w\rangle=0$ for all
$v,w\in V$ and $(a,b)\in\g$. So condition (1) is equivalent
to the statement that if $v_0\in V_0$ is sufficiently
general, then $w\in V$ satisfies
$\Tr((aw)v_0^T)=\Tr((wb)v_0^T)=0$ for all skew-symmetric
$a\in\CC^{n\times n},b\in\CC^{t\times t}$ if and only if $w$
is a diagonal matrix. Using that symmetric matrices form the
orthogonal complement, with respect to the trace form, of
the space of skew-symmetric matrices, this is the content 
of \cite[Lemma 4.7]{drusvyatskiy-lee-ottaviani-thomas}. Condition (2) follows from the fact that the Zariski-dense subset of $X$ of {\em real} $n\times t$ matrices admit a singular value decomposition.\bigskip

We will generalize the result of \cite{drusvyatskiy-lee-ottaviani-thomas}
by showing that conditions (1) and (2) are sufficient for
establishing that the ED degree of $X$ in $V$ equals that of $X_0$
in $V_0$, and we will describe the orthogonal representations
that have such a subspace $V_0$---these turn out to be the {\em
polar representations} of the title.\bigskip

The remainder of the paper
is organized as follows.  In Section~\ref{section:mainresults} we
state our main results. Section~\ref{section:symmetric} showcases a
concrete optimization problem amenable to our techniques: given a real
symmetric matrix, find a closest symmetric matrix with prescribed
eigenvalues. In Section~\ref{section:realcomplex} we discuss the
relation between complex varieties to which our theorem applies, acted
upon by complex reductive groups; and their real counterparts acted
upon by compact Lie groups.  Section~\ref{section:proof} contains the
proof of our main theorem, and Section~\ref{section:testing} discusses
one possible approach for conclusively testing whether an orthogonal
representation is polar. Finally, in Section~\ref{section:examples}
we discuss some of the most important polar representations coming from the irreducible real polar representation found in~\cite{dadok}.

\section{Main results}\label{section:mainresults}

Let $V$ be a finite-dimensional complex vector space equipped with a non-degenerate symmetric bilinear form $\langle-|-\rangle\colon V\times V\rightarrow\CC$. Let $G$ a complex algebraic group and let $G \to O(V)$ be an orthogonal representation.

\begin{main}
Suppose that $V$ has a linear subspace $V_0$ such that, for sufficiently general $v_0 \in V_0$, the space $V$ is the orthogonal direct sum of $V_0$ and the tangent space $T_{v_0} G v_0$ of $v_0$ to its $G$-orbit. Let $X$ be a $G$-stable closed subvariety of $V$. Set $X_0:=X \cap V_0$ and suppose that $GX_0$ is dense in $X$. Then the ED degree of $X$ in $V$ equals the ED degree of $X_0$ in $V_0$.
\end{main}

\begin{re}
The condition that for sufficiently general $v_0 \in V_0$ the space $V$ is the orthogonal direct sum of $V_0$ and $T_{v_0} G v_0$ implies that the restriction of the form $\langle-|-\rangle$ to $V_0$ is non-degenerate and that $V_0$ and $T_{v_0}Gv_0$ are perpendicular for all $v_0\in V_0$.
\end{re}

\begin{re}
When $T_xX\cap (T_xX)^{\perp}=\{0\}$ for some $x\in
X^{\reg}$, then the ED degree of $X$ in $V$ is positive by
\cite[Theorem 4.1]{draisma-horobet-ottaviani-sturmfels-thomas}. Whenever $X$ is the complexification of a real variety with smooth points, this condition is satisfied. Also note that this condition implies that $T_xX_0\cap (T_xX_0)^{\perp}=\{0\}$ for some $x\in X_0^{\reg}$, so that the ED degree of $X_0$ in $V_0$ is positive as well.
\end{re}

The (proof of the) Main Theorem has the following real analogue. 

\begin{thm}\label{thm_main_real}
Let $V_{\RR}$ be a finite-dimensional real vector space
equipped with a positive definite inner product. Let $K$ be
a Lie group and let $K\to\OO(V_{\RR})$ be an orthogonal
representation. Suppose that $V_{\RR}$ has a linear subspace
$V_{\RR,0}$ such that, for sufficiently general $v_0 \in
V_{\RR,0}$, the space $V_{\RR}$ is the orthogonal direct sum
of $V_{\RR,0}$ and $T_{v_0} Kv_0$. Then every $K$-orbit
intersects $V_{\RR,0}$. Let $X$ be a real $K$-stable closed
subvariety of $V_{\RR}$ and set $X_0:=X \cap V_{\RR,0}$.
Then the number of real critical points of the distance
function to a point is constant along orbits of $K$ and the
set of real critical points on $X$ for a sufficiently
general $v_0\in V_{\RR,0}$ is contained in $X_0$.
\end{thm}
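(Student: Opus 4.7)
The plan is to reduce the entire theorem to one observation that exploits the skew-symmetry of every element $Y$ in the Lie algebra $\mathfrak{k}$ of $K$, viewed inside $\mathfrak{o}(V_{\RR})$. Specifically, I would first prove the following: if $u\in V_{\RR,0}$ is sufficiently general in the sense of the hypothesis and $y\in V_{\RR}$ satisfies $\langle u-y,Yy\rangle=0$ for all $Y\in\mathfrak{k}$, then $y\in V_{\RR,0}$. Indeed, skew-symmetry of $Y$ gives $\langle y,Yy\rangle=0$ automatically, hence $\langle u,Yy\rangle=0$, which a second use of skew-symmetry rewrites as $\langle Yu,y\rangle=0$ for every $Y$; so $y\perp \mathfrak{k}u=T_u(Ku)=V_{\RR,0}^{\perp}$, yielding $y\in V_{\RR,0}$.

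With this observation in hand, the constancy of critical-point counts along $K$-orbits is pure equivariance. If $x\in X^{\reg}$ satisfies $u-x\perp T_xX$, then $gu-gx=g(u-x)\perp gT_xX=T_{gx}X$ and $gx\in X^{\reg}$, since both $X$ and $X^{\reg}$ are $K$-stable; thus $x\mapsto gx$ bijects the real critical points of $d_u$ with those of $d_{gu}$.

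Next I would establish the containment assertion. Fix a sufficiently general $v_0\in V_{\RR,0}$ and let $x\in X^{\reg}$ be a critical point of $d_{v_0}$, so $v_0-x\perp T_xX$. Because $X$ is $K$-stable and $x$ is smooth, the orbit tangent $T_x(Kx)=\mathfrak{k}x$ is contained in $T_xX$, and hence $\langle v_0-x,Yx\rangle=0$ for every $Y\in\mathfrak{k}$. The observation above, applied with $u=v_0$ and $y=x$, then forces $x\in V_{\RR,0}$, i.e., $x\in X_0$.

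Finally, the statement that every $K$-orbit meets $V_{\RR,0}$ drops out by the same mechanism: for any $y'\in V_{\RR}$ and the fixed generic $v_0$, I would argue that the continuous function $y\mapsto\|v_0-y\|^2$ attains a minimum $y\in Ky'$; at this minimum $\langle v_0-y,Yy\rangle=0$ for all $Y\in\mathfrak{k}$, and the observation places $y$ in $V_{\RR,0}\cap Ky'$. The main obstacle is precisely ensuring the existence of this minimum: it is automatic when $K$ is a compact Lie group (the natural setting in view of Section~\ref{section:realcomplex}), and otherwise one needs at least that $K$-orbits be closed in $V_{\RR}$ so that properness of the squared distance on the orbit takes over.
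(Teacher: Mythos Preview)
Your argument is correct and coincides with the paper's approach: your skew-symmetry ``observation'' is exactly Lemma~\ref{lm:XcritIsX0crit}, and the equivariance step is Lemma~\ref{lm:Move}. The one place you add something the paper leaves implicit is the claim that every $K$-orbit meets $V_{\RR,0}$: your distance-minimisation argument on the orbit is the standard proof for polar actions and is fine once $K$ is compact (which, as you note, is the intended setting given Section~\ref{section:realcomplex}); the paper simply asserts the real analogue without spelling this step out.
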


\begin{re}
When we consider an arbitrary $v_0\in V_{\RR,0}$, the space
$$
N_{v_0}=\{v\in V_{\RR}\mid v\perp T_{v_0}Kv_0\}
$$
contains $V_{\RR,0}$, but may be bigger. So while it is still true that the critical points on $X$ for $v_0$ are orthogonal to $T_{v_0}Kv_0$, this does not imply that they lie in $V_{\RR,0}$. However, in this case the stabilizer $K_{v_0}$ acts on $N_{v_0}$ and by \cite[Theorem 2.4]{dadok-kac} this representation again satisfies the conditions of Theorem \ref{thm_main_real} with the subspace $V_{\RR,0}$ of $N_{v_0}$ again playing the same role. In particular, the $K_{v_0}$-orbit of any critical point on $X$ for $v_0$ intersects $V_{\RR,0}$. This allows us to still restrict the search for critical points on $X$ for $v_0$ to $X_0$. Since $K_{v_0}$ preserves the distance to $v_0$, the same is true for closest points on $X$ to $v_0$. 
\end{re}

Apart from proving the Main Theorem, we also classify all orthogonal representations $G\rightarrow\OO(V)$ for which a subspace $V_0$ as in the Main Theorem exists. Theorem~\ref{thm:Polar} below relates this problem, in the case of reductive
$G$, to the classification by Dadok and Kac of so-called {\em polar
representations} \cite{dadok-kac,dadok}.

\begin{de}\label{de:stablepolar}
A complex orthogonal representation $V$ of a reductive algebraic group $G$ is called stable polar when there exists a vector $v\in V$ such that the orbit $Gv$ is closed and maximal-dimensional among all orbits of $G$ and such that the codimension of the subspace $\{x\in V_{\CC}|\g x\subseteq \g v\}$ equals the dimension of $Gv$ where $\g$ is the (complex) Lie algebra of $G$.
\end{de}

\begin{de}
A real orthogonal representation $V_{\RR}$ of a compact Lie group $K$ is called polar when there exists a vector $v\in V_{\RR}$ such that the orbit $Kv$ is maximal-dimensional among all orbits of $K$ and such that $\k u$ is perpendicular to $(\k v)^{\perp}$ for all $u\in (\k v)^{\perp}$ where $\k$ is the (real) Lie algebra of $K$.
\end{de}

\begin{thm}\label{thm:Polar}
Let $V$ be an orthogonal representation of a reductive group $G$. Then the following are equivalent:
\begin{itemize}
\item[(i)] $V$ satisfies the conditions of the Main Theorem; 
\item[(ii)] $V$ is a stable polar representation; and
\item[(iii)] $V$ is the complexification of a polar representation of a maximal compact Lie group $K$ contained in $G$.
\end{itemize} 
\end{thm}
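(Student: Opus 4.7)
The strategy is to reduce to the classification of Dadok and Kac: the equivalence (ii) $\Leftrightarrow$ (iii) is Theorem~2.4 of \cite{dadok-kac}, so only (i) $\Leftrightarrow$ (ii) requires a direct argument.

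For (ii) $\Rightarrow$ (i), let $v$ witness stable polarity and set $V_0 := \{x \in V : \g x \subseteq \g v\}$. The key observation is that $V_0 \perp \g v$: for $x \in V_0$ and $\xi \in \g$, writing $\xi x = \eta v$ and using skew-adjointness of the $\g$-action on $V$ gives $\langle x | \xi v\rangle = -\langle \eta v | v\rangle = 0$. By upper semicontinuity of $\dim \g v_0$ on $V_0$, and the fact that $v \in V_0$ attains the maximum, generic $v_0 \in V_0$ also satisfies $\g v_0 = \g v$; the dimensional hypothesis of stable polarity then produces the orthogonal direct sum $V = V_0 \oplus^\perp \g v_0$.

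For (i) $\Rightarrow$ (ii), I take a generic $v_0 \in V_0$ as candidate witness. The derivative of $G \times V_0 \to V$ at $(e, v_0)$ is $(\xi, u) \mapsto \xi v_0 + u$, surjective by condition (1), so $GV_0$ is dense in $V$ and $\dim Gv_0 = \dim V - \dim V_0$ is the maximum orbit dimension. To obtain the codimension condition I show $U := \{x \in V : \g x \subseteq \g v_0\}$ equals $V_0$: the inclusion $V_0 \subseteq U$ follows because for $x \in V_0$ the subspace $\g(v_0 + tx)$ equals $V_0^\perp$ for generic $t \in \CC$, so reading off the linear-in-$t$ term of $\xi v_0 + t\xi x \in V_0^\perp$ forces $\xi x \in V_0^\perp = \g v_0$; the reverse inclusion $U \subseteq V_0$ repeats the skew-adjointness calculation to obtain $U \perp \g v_0$, whence $U \subseteq (\g v_0)^\perp = V_0$. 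Hence $U$ has codimension $\dim Gv_0$, as required.

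The remaining requirement, and the main obstacle, is that $Gv_0$ be closed. I would address this via a compatible real structure: choose a maximal compact $K \subseteq G$ and a $K$-invariant positive definite Hermitian form $h$ on $V$, producing an antilinear involution $\sigma$ with $h(u,v) = \langle u | \sigma v\rangle$ and a real form $V_\RR = V^\sigma$ on which $\langle-|-\rangle$ restricts to a positive definite inner product. After replacing $V_0$ by a suitable $G$-translate one arranges $V_0 = (V_0 \cap V_\RR)\otimes\CC$, and a generic $v_0 \in V_0 \cap V_\RR$ is then a norm minimiser along its $G$-orbit by the orthogonality in condition (1), so Kempf--Ness yields that $Gv_0$ is closed. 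This closedness step is genuinely where the reductive hypothesis is essential; an alternative is to deduce it from Luna's slice theorem applied at $v_0$.
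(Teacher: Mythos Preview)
Your argument is largely sound and close in spirit to the paper's, but the organisation differs and there are two points that need tightening.

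\textbf{Organisation.} The paper does not cite (ii)$\Leftrightarrow$(iii) as a black box; it proves the cycle (ii)$\Rightarrow$(i)$\Rightarrow$(iii)$\Rightarrow$(ii). In particular it never proves (i)$\Rightarrow$(ii) directly. Your explicit verification that $U:=\{x:\g x\subseteq\g v_0\}$ equals $V_0$ is therefore an ingredient the paper does not need; it is correct and pleasant, but you should check that the specific result you cite from \cite{dadok-kac} for (ii)$\Leftrightarrow$(iii) really says what you claim (Theorem~2.4 there concerns slice representations).

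\textbf{Gap in (ii)$\Rightarrow$(i).} You establish $V_0\perp\g v$ and $\dim V_0+\dim\g v=\dim V$, then assert this ``produces the orthogonal direct sum''. It does not: any nonzero $w\in V_0\cap\g v$ would be orthogonal to both summands, and you have not yet shown $V_0+\g v=V$, so non-degeneracy of the ambient form does not rule this out. The paper closes this gap by invoking \cite[Corollary~2.5]{dadok-kac}, which supplies the direct sum $V=V_0\oplus\g v$; orthogonality is then added by exactly your computation. You should cite the same result or prove non-degeneracy of the form on $\g v$ separately (this genuinely uses closedness of $Gv$).

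\textbf{Closedness in (i)$\Rightarrow$(ii).} Your plan is right, and the step ``replace $V_0$ by a $G$-translate defined over $\RR$'' does work, but it deserves one sentence of justification: since $GU$ is Zariski-dense and $V_\RR$ is Zariski-dense, choose $v\in V_\RR\cap GU$, say $v=gw$; then $gV_0=(\g v)^\perp$, and because $v\in V_\RR$ the subspace $\g v=\k v\otimes\CC$ is $\sigma$-stable, hence so is its orthogonal complement. After that, note that the appeal to condition~(1) for the norm-minimiser property is superfluous: for \emph{any} $v_0\in V_\RR$ one has $h(\xi v_0,v_0)=\langle\xi v_0\,|\,v_0\rangle=0$ by skew-adjointness alone, so Kempf--Ness (equivalently \cite[Theorem~1.1]{dadok-kac}) already gives $Gv_0$ closed. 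This is precisely how the paper handles closedness in its (iii)$\Rightarrow$(ii) step.
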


\begin{re}
In (ii), we ask for the representation $V$ to be stable, i.e. for there to exist a $v\in V$ whose orbit is closed and maximal-dimensional among all orbits. This is a notion coming from Geometric Invariant Theory and should not be confused with the notion of a subset $X$ of $V$ being $G$-stable, i.e. having $gX\subseteq X$ for all $g\in G$.

The only places in this paper where the word stable refers to the notion from GIT are in Definition \ref{de:stablepolar} and Theorem \ref{thm:Polar}. 
\end{re}

\begin{re}
Analogously to the equivalence (i)$\Leftrightarrow$(ii) of
Theorem \ref{thm:Polar}, the conditions on $V_{\RR}$ in
Theorem \ref{thm_main_real} are equivalent to $V_{\RR}$
being a polar representation.
\end{re}

In the paper \cite{dadok}, the irreducible real polar representations of compact Lie groups are completely classified, giving us a list of spaces on which our Main Theorem can be applied. We discuss some of these spaces in section \ref{section:examples}. 

\section{Interlude: the closest symmetric matrix with prescribed
eigenvalues} \label{section:symmetric}

Given a symmetric matrix $A \in \RR^{n \times n}$ and given
a sequance of real numbers
$\lambda=(\lambda_1 \leq \lambda_2 \leq \ldots \leq
\lambda_n)$, how does one
find the symmetric matrx $B \in \RR^{n \times n}$ with
spectrum $\lambda$ that minimizes $d_A(B):=\sum_{i,j} (a_{ij}-b_{ij})^2$?

To cast this as an instance of Theorem~\ref{thm_main_real}, take
for $V_{\RR}$ the space of real symmetric matrices acted upon by the group
$K=\OO_n(\RR)$ of orthogonal $n \times n$-matrices via the action $\alpha:
(g,A) \mapsto gAg^T$. The $K$-invariant inner product on
$V_{\RR}$ is
given by
\[ \langle C|D \rangle=\Tr C^T D = \sum_{i,j} c_{ij} d_{ij}.
\]
We claim that the space $V_{\RR,0}$ of {\em diagonal} matrices
has the properties of Theorem~\ref{thm_main_real}.  Indeed, if $D$
is any diagonal matrix with distinct eigenvalues, then differentiating
$\alpha$ and using that the Lie algebra $\k$ of $K$ is the Lie algebra
of skew-symmetric matrices, we find that
\[ T_D K D=\{BD - DB \mid B^T=-B\} \]
is precisely the space of symmetric matrices with zeroes on the diagonal,
i.e., the orthogonal complement of $V_{\RR,0}$.

Let $X$ be the real-algebraic variety in $V_{\RR}$ consisting of matrices
with the prescribed spectrum $\lambda$. Then Theorem~\ref{thm_main_real}
says that, if $A$ lies in $V_0$, so that $A=\diag(\mu_1,\ldots,\mu_n)$,
then the critical points of $d_A$ on $X$ are the same as the critical
points of the restriction of $d_A$ to $X_0:=X \cap V_0$. If the
$\lambda_i$ are distinct, then this intersection consists of $n!$ diagonal
matrices, one for each permutation of the $\lambda_i$. Accordingly,
the ED degree of the complexification of $X$ (the subject of the Main
Theorem) is then $n!$. If the $\lambda_i$ are not distinct but come with
multiplicities $n_1,\ldots,n_k$ adding up to $n$, then the ED degree is
the multinomial coefficient $\frac{n!}{n_1! \cdots n_k!}$. The group
$S_n$ here is the Weyl group from Section~\ref{section:proof}. In
Example~\ref{ex:adjoint} we will see a large class of examples where
the ED degree equals the order of the Weyl group.

Still assuming that $A$ is diagonal, we get a diagonal matrix $B \in
X_0$ closest to $A$ by arranging the $\lambda_i$ in the same order as
the $\mu_i$. To see this, let $\pi\in S_n$ be a permutation. If $\mu_i<\mu_j$ and $\lambda_{\pi(i)}>\lambda_{\pi(j)}$ for some $i,j\in[n]$, then
$$
(\mu_i-\lambda_{\pi(i)})^2+(\mu_j-\lambda_{\pi(j)})^2-(\mu_i-\lambda_{\pi(j)})^2-(\mu_j-\lambda_{\pi(i)})^2=2(\mu_j-\mu_i)(\lambda_{\pi(i)}-\lambda_{\pi(j)})>0
$$
and so $\pi$ cannot minimize $\sum_i (\mu_i-\lambda_{\pi(i)})^2$.

Now when $A$ is not diagonal to begin with, we first compute $g \in
\OO_n(\RR)$ such that $A_0:=g A g^T$ is diagonal, find a diagonal matrix
$B_0$ closest to $A_0$ as above, and then $B:=g^{-1} B_0 g^{-T}$ is a
solution to the original problem. In the same manner, one obtains all
critical points of $d_A$ from those of $d_{A_0}$.

\section{Real compact versus complex reductive}
\label{section:realcomplex}

We will use the correspondence between compact Lie groups and reductive complex linear algebraic groups.

\begin{thm}~
\begin{itemize}
\item[(i)] Any reductive complex algebraic group $G$ contains a maximal compact Lie group. All such subgroups are conjugate and Zariski dense in $G$.
\item[(ii)] Any compact Lie group is maximal in a reductive complex algebraic group, which is unique up to isomorphism.
\end{itemize}
\end{thm}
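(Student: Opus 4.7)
Since this theorem is classical, my plan is not to develop new machinery but to reduce each part to standard facts about Cartan decompositions, polar decomposition in reductive groups, and Peter--Weyl together with Tannakian duality. I would cite Onishchik--Vinberg or Chevalley for the detailed technicalities rather than reproving them in full.

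For part (i), I would start by fixing a faithful algebraic representation $\rho\colon G \hookrightarrow \GL(V)$. Since $G$ is reductive, its complex Lie algebra $\g$ admits a compact real form, that is, a real Lie subalgebra $\k \subset \g$ with $\g = \k \oplus i\k$. Exponentiating $\k$ produces a connected compact Lie subgroup $K_0 \subseteq G$, and enlarging by a set of coset representatives for a maximal compact of the (finite) component group $G/G^{\circ}$ yields the desired $K$. Conjugacy of any two maximal compact subgroups is the Cartan--Iwasawa--Mal'cev theorem applied to the underlying real Lie group of $G$. For Zariski density, note that $\dim_{\RR}K = \dim_{\CC}G$ and that the Lie algebra of the Zariski closure of $K$ is a complex Lie subalgebra of $\g$ containing $\k$, hence equals $\g$; combined with the fact that $K$ meets every connected component of $G$, this forces $K$ to be Zariski dense.

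For part (ii), given a compact Lie group $K$, the Peter--Weyl theorem provides a faithful unitary representation $K \hookrightarrow \OO(n)$ (or $\SU(n)$), and I would take $G$ to be the Zariski closure of $K$ inside $\GL_n(\CC)$. Reductivity follows because every finite-dimensional rational $G$-representation restricts to a continuous $K$-representation, which is unitary and hence completely reducible; Zariski density of $K$ then transports complete reducibility back to $G$, and in characteristic zero linear reductivity is equivalent to reductivity. That $K$ is actually maximal compact in $G$ follows because a strictly larger compact subgroup would stabilise a positive definite Hermitian form on $\CC^n$, violating the dimension count $\dim_{\RR} K = \dim_{\CC} G$. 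Uniqueness up to isomorphism is the Tannakian step: the categories of finite-dimensional rational $G$-representations and continuous $K$-representations are equivalent under restriction (again by Zariski density), so $G$ is determined as a complex algebraic group by its representation category, which is determined by $K$ alone.

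The main obstacle is not any single deep step but rather the careful handling of the component group of $G$ in (i) and the correct identification of the Zariski closure with the algebraic hull in (ii). These are technical but entirely standard, so in writing up the section I would refer to a textbook treatment of the compact--reductive correspondence rather than grind through the verifications; the statement is used here as foundational background, not as a novel contribution of the paper.
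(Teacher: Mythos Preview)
Your proposal is correct and aligns with the paper's approach: the paper's entire proof is a two-line citation to Procesi and Onishchik--Vinberg, treating the statement as classical background. Your sketch goes further by outlining the standard argument, but you correctly conclude that the right move is to cite a textbook treatment rather than reprove it, which is exactly what the paper does.
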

\begin{proof}
See for example \cite[Subsection 8.7.2]{procesi} and \cite[Section 5.2]{onishchik-vinberg}.
\end{proof}

The following lemma is well known, but included for completeness.

\begin{lm}
The real orthogonal group $\OO_n(\RR)$ is a maximal compact subgroup of the complex orthogonal group $\OO_n$.
\end{lm}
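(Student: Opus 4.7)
The plan is to verify that $\OO_n(\RR)$ is compact---immediate, since it is a closed bounded subset of $\RR^{n\times n}$---and then to show that any compact subgroup $K\subseteq\OO_n$ containing $\OO_n(\RR)$ is already equal to $\OO_n(\RR)$. Only the latter requires real work.

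The key step is to produce an auxiliary inner product on $\CC^n$ that $K$ must preserve. Since $K$ is compact, averaging the standard Hermitian form $H_0(x,y):=\sum_i x_i\bar y_i$ against Haar measure on $K$ yields a $K$-invariant positive definite Hermitian form $H$. Because $\OO_n(\RR)\subseteq K$, this $H$ is in particular $\OO_n(\RR)$-invariant. Now the standard representation of $\OO_n(\RR)$ on $\RR^n$ is irreducible and of real type---its invariant bilinear form is symmetric, so the Frobenius--Schur indicator equals $+1$---so the complexified representation on $\CC^n$ remains irreducible (the case $n=1$ is trivial). Schur's lemma then forces the space of $\OO_n(\RR)$-invariant Hermitian forms on $\CC^n$ to be one-dimensional over $\RR$, so $H$ must be a positive scalar multiple of $H_0$, whence $K$ itself preserves $H_0$.

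It follows that $K\subseteq U(n)\cap\OO_n$, and any $g$ in this intersection satisfies both $g^T g=I$ and $g^* g=I$; thus $g^T=g^*$, i.e.\ $g=\bar g$ is real and lies in $\OO_n(\RR)$. This yields $K=\OO_n(\RR)$. The only genuinely non-trivial ingredient is the irreducibility of the $\OO_n(\RR)$-action on $\CC^n$; one could bypass the Frobenius--Schur language by a direct argument: for any $0\ne w=u+iv\in W$ with $W\subseteq\CC^n$ invariant, the reflection that fixes $u$ and negates its orthogonal complement shows $u\in W$, after which transitivity of $\OO_n(\RR)$ on real spheres gives $\RR^n\subseteq W$ and hence $W=\CC^n$.
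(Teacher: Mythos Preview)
Your argument is correct and follows essentially the same strategy as the paper's proof: produce a $K$-invariant Hermitian form by averaging, observe that the only $\OO_n(\RR)$-invariant Hermitian forms are scalar multiples of the standard one, and conclude that $K\subseteq U(n)\cap\OO_n=\OO_n(\RR)$. The paper simply asserts the uniqueness of the invariant Hermitian form and the equality $U(n)\cap\OO_n=\OO_n(\RR)$, whereas you supply justifications (irreducibility of $\CC^n$ as an $\OO_n(\RR)$-module plus Schur, and the computation $g^T=g^*$), so your write-up is a strictly more detailed version of the same proof.
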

\begin{proof}
Any compact subgroup of $\OO_n$ leaves invariant some Hermitian positive-definite form on $\CC^n$. The only Hermitian positive definite forms that are $\OO_n(\RR)$-invariant are multiples of the standard form. So any compact subgroup of $\OO_n$ containing $\OO_n(\RR)$ is contained in the unitary group $U(n)$. Since $\OO_n(\RR)=\OO_n\cap U(n)$, we see that $\OO_n(\RR)$ is maximal.
\end{proof}

Let $G$ be a reductive linear algebraic group and let $K$ be a maximal compact Lie group contained in $G$. Then the complexification of any real representation of $K$ naturally has the structure of a representation of $G$.

\begin{prop}\label{prop:orthrep=complxiorthrep}
A (complex) representation of $G$ is orthogonal if and only if it is the complexification of a (real) representation of $K$ that is orthogonal with respect to some positive definite inner product.
\end{prop}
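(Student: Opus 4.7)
The plan is to prove both implications; the non-trivial direction is to produce a $K$-equivariant real structure on an arbitrary orthogonal $G$-representation.

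For $(\Leftarrow)$, suppose $V_{\RR}$ is a real $K$-representation equipped with a $K$-invariant positive definite inner product $B$. Extend $B$ by $\CC$-bilinearity to a symmetric non-degenerate $\CC$-bilinear form $\langle -, -\rangle$ on $V:=\CC\otimes_{\RR}V_{\RR}$. By construction $\langle -, -\rangle$ is $K$-invariant; since the stabiliser of this form in $G$ is Zariski closed and contains the Zariski-dense subgroup $K$, it equals $G$, and $V$ is an orthogonal $G$-representation.

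For $(\Rightarrow)$, suppose $V$ carries a $G$-invariant non-degenerate symmetric $\CC$-bilinear form $\langle -, -\rangle$. The goal is to produce a $K$-equivariant conjugate-linear involution $\tau\colon V\to V$, for then $V_{\RR}:=V^{\tau}$ is a $K$-stable real form of $V$ on which $\langle -, -\rangle$ will restrict to a positive definite inner product. I would proceed as follows. First, average any Hermitian inner product over $K$ to obtain a $K$-invariant positive definite Hermitian form $H$ on $V$. The forms $H$ and $\langle -, -\rangle$ each induce a $K$-equivariant bijection $V\to V^{*}$ (conjugate-linear and $\CC$-linear respectively); composing them yields a $K$-equivariant conjugate-linear bijection $\sigma\colon V\to V$ characterised by $H(u,\sigma v)=\langle u,v\rangle$. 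Using the symmetry of $\langle -, -\rangle$ together with the Hermitian symmetry of $H$, a short manipulation gives $H(v,\sigma^{2}v)=H(\sigma v,\sigma v)$, which is strictly positive for $v\neq 0$; hence $\sigma^{2}$ is a positive-definite Hermitian operator with respect to $H$. Let $A$ be its unique positive square root. Since $\sigma$ commutes with $\sigma^{2}$ it preserves each $\sigma^{2}$-eigenspace, on which $A$ is a positive scalar; therefore $\sigma$ commutes with $A$. Setting $\tau:=A^{-1}\sigma$ then gives $\tau^{2}=A^{-2}\sigma^{2}=\mathrm{id}$, as required. To close the argument, note that on $V_{\RR}=V^{\tau}$ the map $\sigma$ coincides with $A$, so $\langle v,v\rangle=H(v,Av)>0$, giving the required $K$-invariant positive definite inner product whose $\CC$-bilinear extension recovers $\langle -, -\rangle$.

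The main obstacle is producing the involution $\tau$ from $\sigma$, and this rests on the $H$-positivity of $\sigma^{2}$. Conceptually this positivity expresses the Frobenius--Schur fact that every irreducible $K$-subrepresentation of $V$ is of real type, since $\langle -, -\rangle$ provides an equivariant symmetric self-duality. The explicit formula $\tau=A^{-1}\sigma$ avoids any appeal to the classification of real types, but one must track the interplay between the $\CC$-bilinear and Hermitian pairings with some care.
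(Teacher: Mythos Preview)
Your argument is correct. The easy direction $(\Leftarrow)$ matches the paper verbatim. For $(\Rightarrow)$ you and the paper take genuinely different routes. The paper argues non-constructively via conjugacy of maximal compact subgroups of $\OO(V)$: one first picks any real form $W\subseteq V$ on which $\langle-|-\rangle$ is positive definite, so that $\OO(W)$ is a maximal compact in $\OO(V)$; then, since the image of $K$ lies in \emph{some} maximal compact $H$ of $\OO(V)$, conjugacy gives $g\in\OO(V)$ with $H=g\,\OO(W)\,g^{-1}$, and $V_{\RR}:=gW$ is the desired $K$-stable real form. Your approach instead builds the real structure explicitly: from a $K$-invariant Hermitian form $H$ you produce the conjugate-linear $\sigma$, show $\sigma^{2}$ is $H$-positive, and take $\tau=(\sigma^{2})^{-1/2}\sigma$. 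The paper's proof is shorter and leans on general Lie theory; yours is self-contained and constructive, and makes transparent why the restriction of $\langle-|-\rangle$ to $V^{\tau}$ is positive definite. One small point worth stating explicitly in your write-up is that $A$ is $K$-equivariant: this follows because $K$ acts $H$-unitarily, so $kAk^{-1}$ is again a positive self-adjoint square root of $\sigma^{2}$ and hence equals $A$ by uniqueness.
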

\begin{proof}
Let $V$ be an orthogonal real representation of $K$ and let $V_{\CC}$ be its complexification. Extend the inner product $\langle-|-\rangle$ on $V$ to a non-degenerate symmetric bilinear form on $V_{\CC}$. Then $\langle v|w\rangle=\langle gv|gw\rangle$ for all $v,w\in V_{\CC}$ and $g\in K$. So since $K$ is Zariski dense in $G$, we see that $V_{\CC}$ is an orthogonal representation of $G$.

Let $V$ be an orthogonal complex representation of $G$. Then the image of $K$ in $O(V)$ is contained in some maximal compact subgroup $H$ of $O(V)$. Let $W$ be a real subspace of $V$ with $W\otimes\CC=V$ such that the bilinear form on $V$ restricts to a $\RR$-valued positive definite inner product on $W$. Since all maximal compact subgroups of $O(V)$ are conjugate, we see that 
$$
H=gO(W)g^{-1}
$$
for some $g\in O(V)$. Let $V_{\RR}$ be the real vector space $gW$ with inner product $\langle v|w\rangle_{V_{\RR}}=\langle g^{-1}v|g^{-1}w\rangle$ for all $v,w\in V_{\RR}$. Then $V_{\RR}$ is an orthogonal representation of $K$ whose complexification is isomorphic to $V=W\otimes_{\RR}\CC$ by the map $g^{-1}$.
\end{proof}

Let $\g$ be the (complex) Lie algebra of $G$ and let $\k$ be the (real) Lie algebra of $K$. The following theorem is a reformulation of Theorem \ref{thm:Polar}.

\begin{thm}\label{thm:Polar_reform}
Let $V_{\RR}$ be an orthogonal representation of $K$ and let $V_{\CC}$ be its complexification. Then the following are equivalent:
\begin{itemize}
\item[(i)] there exists a (complex) subspace $V_{\CC,0}$ of $V_{\CC}$ such that, for $v_0\in V_{\CC,0}$ sufficientely general, the space $V_{\CC}$ is the orthogonal direct sum of $V_{\CC,0}$ and $\g v_0$;
\item[(ii)] there exists a vector $v\in V_{\CC}$ such that the orbit $Gv$ is closed and maximal-dimensional among all orbits of $G$ and such that the codimension of the subspace $\{x\in V_{\CC}|\g x\subseteq \g v\}$ equals the dimension of $Gv$; and
\item[(iii)] there exists a vector $v\in V_{\RR}$ such that the orbit $Kv$ is maximal-dimensional among all orbits of $K$ and such that $\k u$ is perpendicular to $(\k v)^{\perp}$ for all $u\in (\k v)^{\perp}$.
\end{itemize}
\end{thm}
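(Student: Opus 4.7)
The plan is to prove the cycle (iii) $\Rightarrow$ (i) $\Rightarrow$ (ii) $\Rightarrow$ (iii), invoking the Dadok--Kac theory of polar representations at several points.

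For (iii) $\Rightarrow$ (i), I would start from a real polar vector $v\in V_{\RR}$, set $V_{\RR,0}:=(\k v)^{\perp}$, and let $V_{\CC,0}:=V_{\RR,0}\otimes_{\RR}\CC$. Using $\g=\k\oplus i\k$ together with $v\in V_{\RR}$ yields $\g v=(\k v)\otimes\CC=V_{\CC,0}^{\perp}$, while the polar hypothesis $\k u\subseteq\k v$ for $u\in V_{\RR,0}$ extends $\CC$-bilinearly to $\g u\subseteq V_{\CC,0}^{\perp}$ for every $u\in V_{\CC,0}$. A dimension count, using that $v$ itself already witnesses maximality of $\dim\g v_0$ over $v_0\in V_{\CC,0}$, then gives $\g v_0=V_{\CC,0}^{\perp}$ for Zariski-generic $v_0\in V_{\CC,0}$, which is exactly condition (i).

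For (i) $\Rightarrow$ (ii), I would take $v:=v_0$ sufficiently general in $V_{\CC,0}$. The orbit $Gv_0$ has maximal dimension because the action morphism $G\times V_{\CC,0}\to V_{\CC}$ has surjective differential at $(e,v_0)$ and hence dense image. The inclusion $V_{\CC,0}\subseteq\{x\in V_{\CC}:\g x\subseteq\g v_0\}$ is immediate from the invariance identity $\langle Xx|w\rangle=-\langle x|Xw\rangle$ applied to $x,w\in V_{\CC,0}$ and $X\in\g$, using that $Xw\in V_{\CC,0}^{\perp}=\g v_0$ holds on a Zariski-dense subset of $V_{\CC,0}$ and hence everywhere by linearity. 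The matching reverse inclusion, together with closedness of $Gv_0$, I would obtain by applying Luna's \'etale slice theorem with $V_{\CC,0}$ as a transverse slice at $v_0$; reductivity of the generic stabilizer $G_{v_0}$, the essential input to Luna, follows from descending $V_{\CC,0}$ to a $K$-invariant real form $V_{\RR,0}$, whereupon $K_{v_0}\subseteq G_{v_0}$ is a maximal compact subgroup and $G_{v_0}$ is the complexification of a compact Lie group.

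For (ii) $\Rightarrow$ (iii), I would combine Proposition~\ref{prop:orthrep=complxiorthrep} with the main classification of \cite{dadok-kac}: a stable polar representation of the reductive complex group $G$ is the complexification of a polar representation of a maximal compact subgroup $K\subseteq G$, and the polar vector of (ii) can be moved by $G$ into this real form via the Dadok--Kac correspondence, producing the polar vector in $V_{\RR}$ required by (iii). The hardest step will be establishing closedness of $Gv_0$ in the passage (i) $\Rightarrow$ (ii): condition (i) is purely infinitesimal while closedness is a global algebraic property, so one must first descend $V_{\CC,0}$ to a $K$-invariant real form and then derive both reductivity of the stabilizer and closedness of the orbit from polar slice theory, after which the codimension equality follows formally from Luna's slice theorem.
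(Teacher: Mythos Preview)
Your (iii) $\Rightarrow$ (i) is correct and clean; the paper instead runs the cycle (ii) $\Rightarrow$ (i) $\Rightarrow$ (iii) $\Rightarrow$ (ii), but your direct complexification argument for (iii) $\Rightarrow$ (i) is a legitimate shortcut.

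The genuine gap is in (i) $\Rightarrow$ (ii), precisely at the point you flag as hardest: closedness of $Gv_0$. Your use of Luna is backward. Luna's slice theorem is a statement \emph{about} closed orbits; it does not manufacture closedness from reductivity of the stabilizer. Reductivity of $G_{v_0}$ is necessary (Matsushima) but not sufficient: take $\CC^{*}$ acting on $\CC^{2}$ by $t\cdot(x,y)=(tx,t^{-1}y)$, where $(1,0)$ has trivial stabilizer but non-closed orbit. The correct tool is the Kempf--Ness-type criterion \cite[Theorem~1.1]{dadok-kac}: $Gv$ is closed once $\langle \g v,v\rangle_{H}=0$ for the \emph{Hermitian} extension of the real inner product. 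From (i) you only get $\langle \g v_0\,|\,v_0\rangle=0$ for the \emph{complex bilinear} form, and the two agree only when $v_0\in V_{\RR}$. Your proposed remedy, ``descending $V_{\CC,0}$ to a $K$-invariant real form $V_{\RR,0}$,'' is itself unjustified: condition (i) hands you an arbitrary complex subspace with no a priori compatibility with the real structure, and there is no reason $V_{\CC,0}\cap V_{\RR}$ should span $V_{\CC,0}$.

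What actually works---and this is exactly what the paper does in its (i) $\Rightarrow$ (iii)---is to observe that $GU$ (with $U\subseteq V_{\CC,0}$ the open set where the orthogonal splitting holds) is constructible and dense in $V_{\CC}$, hence meets the Zariski-dense subset $V_{\RR}$ in some $v=gw$. One then replaces $V_{\CC,0}$ by $gV_{\CC,0}$ and obtains a \emph{real} vector at which both the maximal-orbit-dimension and the perpendicularity conditions of (iii) hold. With (iii) in hand, \cite[Theorem~1.1]{dadok-kac} gives closedness directly, yielding (ii). This explains why the paper routes through (iii) rather than attempting (i) $\Rightarrow$ (ii) head-on. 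Your sketch of (ii) $\Rightarrow$ (iii) via ``the Dadok--Kac correspondence'' is also too vague as written; moving the complex polar vector into $V_{\RR}$ would again require the density argument above, so you end up needing the paper's (i) $\Rightarrow$ (iii) step in any case.
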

\begin{proof}~
\begin{itemize}
\item[(ii)$\Rightarrow$(i)] Let $v\in V_{\CC}$ be a vector as in (ii) and take
$$
V_{\CC,0}=\{x\in V_{\CC}|\g x\subseteq \g v\}.
$$
Then for $v_0\in V_{\CC,0}$ sufficiently general, we have
$\g v_0=\g v$. So it suffices to prove that $V_{\CC}$ is the
orthogonal direct sum of $V_{\CC,0}$ and $\g v$. By
\cite[Corollary 2.5]{dadok-kac}, we know that $V_{\CC}$ is the direct sum of $V_{\CC,0}$ (there donoted $c_v$) and $\g v$. We have
$$
\langle V_{\CC,0}|\g v\rangle = -\langle\g V_{\CC,0}|v\rangle=-\langle\g v|v\rangle=\{0\}
$$ 
and therefore the direct sum is orthogonal.
\item[(i)$\Rightarrow$(iii)] Let $V_{\CC,0}$ be a subspace as in (i) and let $U$ be a dense open subset of $V_{\CC,0}$ such that $V_{\CC}$ is the orthogonal direct sum of $V_{\CC,0}$ and $\g w$ for all $w\in U$. Then $GU$ is a dense constructible subset of $V_{\CC}$ and hence contains a dense open subset of $V_{\CC}$. Note that the dimension of the orbit of any element of $GU$ equals the codimension of $V_{\CC,0}$. So since $GU$ is dense in $V_{\CC}$, we see that these orbits must be maximal-dimensional among all orbits of $G$. Since $V_{\RR}$ is dense in $V_{\CC}$, the intersection of $V_{\RR}$ with $GU$ contains a vector $v=gw$ with $g\in G$ and $w\in U$. Since $v\in GU$, we see that 
$$
\dim_{\RR}(Kv)=\dim_{\RR}(\kappa v)=\dim_{\CC}(\g v)=\dim_{\CC}(Gv)
$$
is maximal among the dimensions of all orbits of $K$. The space $V_{\CC}$ is the orthogonal direct sum of $gV_{\CC,0}$ and $\g v$. Therefore we have 
$$
(\k v)^\perp=(\g v)^{\perp}\cap V_{\RR}\subseteq gV_{\CC,0}
$$
and hence for all $u\in (\k v)^{\perp}$, we have 
$$
\langle\k u|(\k v)^{\perp}\rangle\subseteq \langle \g u|gV_{\CC,0}\rangle=\langle g\g g^{-1}u|gV_{\CC,0}\rangle=\langle \g (g^{-1}u)|V_{\CC,0}\rangle=\{0\}.
$$  
\item[(iii)$\Rightarrow$(ii)] Let $v\in V_{\RR}$ be a vector
as in (iii). Since $\langle av|av\rangle=\langle v|v\rangle$
for all $a\in K$, we have $\langle bv|v\rangle+\langle
v|bv\rangle=0$ for all $b\in\k$. So $\langle\k
v|v\rangle=\{0\}$ and $v$ satisfies the condition of 
\cite[Theorem 1.1]{dadok-kac}, because $\langle\g v,v\rangle=\CC\otimes \langle\k v|v\rangle=\{0\}$. Note that the Hermitian form $\langle-,-\rangle$ on $V_{\CC}$ in that theorem is the extension of the inner product on $V_{\RR}$ and that it is not equal to our bilinear form $\langle-|-\rangle$ on $V_{\CC}$. By part (i) of Theorem 1.1, the orbit $Gv$ is closed. Since $K$ is dense in $G$ and since the function $(u\mapsto\dim(Gu))$ is lower semicontinuous, we see that $\dim(Gv)=\dim(Kv)$ is maximal. As stated in the introduction of \cite{dadok-kac}, the dimension of $\{x\in V_{\CC}|\g x\subseteq \g v\}$ is always at most the codimension of a maximal-dimensional orbit of $G$. Since 
$$
\CC\otimes (\k v)^{\perp}\subseteq\CC\otimes \{u\in V_{\RR}|\k u\subseteq\k v\}\subseteq\{x\in V_{\CC}|\g x\subseteq \g v\},
$$
we must have equality.\qedhere
\end{itemize}
\end{proof}

\begin{ex}
Let $G$ be the group $\SL_2(\CC)$ and let $V_{\CC}$ be the irreducible $5$-dimensional representation of $\SL_2(\CC)$. So $V_{\CC}$ is the set of homogeneous polynomials in $x$ and $y$ of degree $4$ and
\begin{eqnarray*}
\sl_2(\CC)&\mapsto&\End_{\CC}(V_{\CC})\\
\begin{pmatrix}a&b\\c&-a\end{pmatrix}&\mapsto&a\left(x\frac{\partial}{\partial x}-y\frac{\partial}{\partial y}\right)+bx\frac{\partial}{\partial y}+cy\frac{\partial}{\partial x}
\end{eqnarray*}
is the corresponding representation of $\sl_2(\CC)$. Let the non-degenerate symmetric bilinear form $\langle-|-\rangle$ on $V_{\CC}$ be given by the Gram matrix
$$
\begin{pmatrix}
&&&&12\\
&&&-3\\
&&2\\
&-3\\
12
\end{pmatrix}
$$
with respect to the basis $x^4,xy^3,x^2y^2,xy^3,y^4$ (obtained by setting $\langle x^4|y^4\rangle=12$ and using $\langle gv|w\rangle=-\langle x|gw\rangle$ for all $v,w\in V_{\CC}$ and $g\in \sl_2(\CC)$). Then $\langle-|-\rangle$ is $\SL_2(\CC)$-invariant. A maximal compact subgroup of $\SL_2(\CC)$ is $K=\SU(2)$. The real subspace
$$
V_{\RR}=\span_{\RR}\left(x^4+y^4, i(x^4-y^4), x^2y^2, xy(x^2-y^2), ixy(x^2+y^2)\right).
$$
of $V_{\CC}$ is $\SU(2)$-stable and has $V_{\CC}$ as its
complexification. See the proofs of 
\cite[Propositions 3 and 5]{itzkowitz-rothman-strassberg} for how $V_{\RR}$ was obtained. We will now check that the three equivalent conditions of the theorem are satisfied.
\begin{itemize}
\item[(i)] Take $V_{\CC,0}=\span_{\CC}(x^4+y^4,x^2+y^2)$. Then $V_{\CC}$ is the orthogonal direct sum of $V_{\CC,0}$ and 
$$
\sl_2(\CC)v_0 =\span_{\CC}\left(x^4-y^4, x^3y, xy^3\right)
$$ 
for all $v_0=a(x^4+y^4)+bx^2y^2$ with $4a^2\neq b^2$. 
\item[(ii)] Take $v=x^4+y^4+x^2y^2$. Then $\dim(\sl_2(\CC) v)=3=\dim(\SL_2(\CC))$. Hence the dimension of $\SL_2(\CC)v$ is maximal. As in the proof of the theorem, we see that the orbit $\SL_2(\CC)v$ is closed and 
$$
\{x\in V_{\CC}|\sl_2(\CC) x\subseteq \sl_2(\CC) v\}=\span_{\CC}(x^4+y^4,x^2+y^2)
$$
has dimension $5-3=2$.
\item[(iii)] Again take $v=x^4+y^4+x^2y^2$. We have
$$
\su(2)=\span_{\RR}\left(\begin{pmatrix}i\\&-i\end{pmatrix},\begin{pmatrix}&-1\\1\end{pmatrix},\begin{pmatrix}&i\\i\end{pmatrix} \right)
$$
and so we see that
$$
\su(2)v=\span_{\RR}\left(i(x^4-y^4),xy(x^2-y^2),ixy(x^2+y^2)\right)
$$
has orthogonal complement
$$
\span_{\RR}\left(x^4+y^4,x^2y^2\right)
$$
and we have $\su(2)u\subseteq\su(2)v$ for all $u$ in this complement.
\end{itemize}
\end{ex}

\section{Proof of the Main Theorem} \label{section:proof}

Let $G \to O(V)$ be an orthogonal representation as in Section \ref{section:mainresults}. Let $X$ be a $G$-stable closed subvariety of $V$. We assume the conditions of the Main Theorem. Note that if we replace $G$ by its unique irreducible component $G^{\circ}$ that contains the identity element, the conditions of the Main Theorem are still satisfied, because $G^{\circ}$ has finite index in $G$. So we may assume that $G$ is irreducible. This implies that all irreducible components of $X$ are also $G$-stable.

\begin{lm} \label{lm:GV0}
The set $GV_0$ is dense in $V$.
\end{lm}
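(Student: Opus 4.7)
The plan is to exhibit a morphism whose image is $GV_0$ and show that its differential is surjective at some point, which forces its image to be Zariski dense in $V$.

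Concretely, I would consider the action morphism
\[
\mu \colon G \times V_0 \to V, \qquad (g, v_0) \mapsto g v_0,
\]
whose image is exactly $GV_0$. Since $V$ is irreducible as an algebraic variety, it suffices to show that $\mu$ is dominant, and for that it is enough to exhibit a single point $(g, v_0)$ at which the differential $d\mu_{(g,v_0)}$ is surjective onto $V$.

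Pick $v_0 \in V_0$ sufficiently general so that the hypothesis of the Main Theorem applies, i.e.\ $V = V_0 \oplus T_{v_0} G v_0$. At the point $(e, v_0)$, where $e \in G$ is the identity, the differential $d\mu_{(e, v_0)} \colon \g \oplus V_0 \to V$ sends $(\xi, w)$ to $\xi \cdot v_0 + w$. Its image is therefore $\g v_0 + V_0 = T_{v_0} G v_0 + V_0$, which equals $V$ by the choice of $v_0$. Hence $d\mu_{(e, v_0)}$ is surjective, so $\mu$ is smooth at $(e, v_0)$ and in particular dominant. Therefore $GV_0$, being the image of $\mu$, contains a nonempty Zariski open subset of $V$ and is dense in $V$.

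The only conceptual point to watch is the identification $T_{v_0} G v_0 = \g v_0$, which is the standard fact that the tangent space to an orbit of an algebraic group action at a point equals the image of the Lie algebra under the infinitesimal action at that point; there is no real obstacle. The reduction to $G$ connected (already performed in the paragraph preceding the lemma) ensures that both $G \times V_0$ and $V$ are irreducible, so that dominance of $\mu$ is equivalent to density of the image.
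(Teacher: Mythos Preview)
Your proof is correct and essentially identical to the paper's own argument: both compute the differential of the action map $G\times V_0\to V$ at $(e,v_0)$ for sufficiently general $v_0$, observe that its image is $\g v_0 + V_0 = V$, and conclude dominance. Your additional remarks on the identification $T_{v_0}Gv_0=\g v_0$ and on irreducibility of $G$ are fine but not needed beyond what the paper already assumes.
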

\begin{proof}
The derivative of the multiplication map $G\times V_0\to V$ at a (smooth) point $(1,v_0)$ equals the map 
\begin{eqnarray*}
\g\oplus V_0 &\to& V\\
(A,u_0) &\mapsto& Av_0+u_0
\end{eqnarray*}
and has image $\g v_0 + V_0$, which by assumption equals $V$ for sufficienly general $v_0\in V_0$. Hence the derivative is surjective at $(1,v_0)$ for some $v_0\in V_0$. Therefore the multiplication map is dominant and its image $GV_0$ is dense in $V$.
\end{proof}

\begin{lm} \label{lm:Move}
For elements $g \in G$ and $u \in V$, the ED critical points for $gu$ are obtained from those of $u$ by applying $g$.
\end{lm}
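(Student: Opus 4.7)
The plan is to verify directly that the bijection $x \mapsto gx$ on $V$ restricts to a bijection between the set of ED critical points for $u$ on $X$ and the set of ED critical points for $gu$ on $X$. Since $g$ is invertible, it suffices to show one direction, as the reverse follows by applying the same argument with $g^{-1}$ and $gu$ in place of $g$ and $u$.

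The argument will rest on three ingredients, all immediate from the hypotheses. First, because $X$ is $G$-stable, the linear automorphism $g \colon V \to V$ restricts to an automorphism of $X$, and hence to a bijection $X^{\reg} \to X^{\reg}$ of the smooth locus. Second, since $g$ is linear, its derivative at any $x$ is $g$ itself, so the restriction to $X$ yields an isomorphism of tangent spaces $g \colon T_x X \xrightarrow{\sim} T_{gx} X$. Third, because the representation $G \to O(V)$ is orthogonal, the bilinear form is $G$-invariant: $\langle g v | g w \rangle = \langle v | w\rangle$ for all $v, w \in V$.

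Given these, suppose $x \in X^{\reg}$ is an ED critical point for $u$, so $\langle u - x | t \rangle = 0$ for every $t \in T_x X$. For any $s \in T_{gx} X$, write $s = g(t)$ with $t \in T_x X$. Then
\[
\langle gu - gx | s \rangle = \langle g(u-x) | g(t) \rangle = \langle u - x | t \rangle = 0,
\]
so $gx \in X^{\reg}$ satisfies $gu - gx \perp T_{gx} X$, i.e.\ $gx$ is an ED critical point for $gu$. Applying the same reasoning to $g^{-1}$ gives the reverse inclusion, completing the bijection.

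There is no real obstacle here; the lemma is a formal consequence of $G$-invariance of the form together with $G$-stability of $X$. The only point requiring any care is the identification $g(T_x X) = T_{gx} X$, which is where we use that $g$ acts on $X$ as an algebraic automorphism and hence sends smooth points to smooth points and tangent spaces to tangent spaces.
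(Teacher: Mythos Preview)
Your proof is correct and follows essentially the same approach as the paper's: both use that $g$ preserves $X$ and $X^{\reg}$, induces an isomorphism $T_xX \to T_{gx}X$ via its derivative, and preserves the bilinear form, to conclude that $u-x \perp T_xX$ iff $gu-gx \perp T_{gx}X$. Your version is slightly more explicit about the bijection and the role of $g^{-1}$, but the substance is identical.
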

\begin{proof}
Let $x$ be a point on $X$. The element $g\in G$ acts linearly and preserves $X$ and $X^{\reg}$. The derivative of the isomorphism $X\rightarrow X, y\mapsto gy$ at $x$ is the isomorphism $T_xX\rightarrow T_{gx}X, w\mapsto gw$. So since $g$ also preserves the billinear form, we have $u-x \perp T_x X$ if and only if $gu-gx\perp T_{gx}X$.
\end{proof}

\begin{lm} \label{lm:genX0isXreg}
A sufficiently general $x_0\in X_0$ lies both in $X_0^{\reg}$ and in $X^{\reg}$.
\end{lm}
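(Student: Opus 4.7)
Plan: The conclusion $x_0 \in X_0^{\reg}$ is automatic on a dense open subset of $X_0$, since the smooth locus of any variety is open and dense in it. The substantive point is therefore to show that for $x_0$ in a nonempty open subset of $X_0$ one also has $x_0 \in X^{\reg}$. My strategy is to leverage the hypothesis of the Main Theorem that $GX_0$ is dense in $X$, together with the $G$-invariance of the smooth locus $X^{\reg}$.

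The key observation is that $X^{\reg}$ is $G$-stable: each $g \in G$ is a linear automorphism of $V$ preserving $X$, hence restricts to an algebraic automorphism of $X$, which maps smooth points to smooth points. This yields the identity
\[
X^{\reg} \cap GX_0 = G\cdot (X^{\reg} \cap X_0),
\]
since if $z = gx_0$ lies in $X^{\reg}$ then $x_0 = g^{-1}z \in g^{-1}X^{\reg} = X^{\reg}$, while the reverse inclusion is obvious. Because $X^{\reg}$ is open and dense in $X$ and $GX_0$ is dense in $X$, the left-hand side is dense in $X$, and in particular nonempty. Consequently $X^{\reg} \cap X_0$ is nonempty, and as the trace of an open subset of $X$ on $X_0$ it is open in $X_0$. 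Intersecting with the open dense subset $X_0^{\reg}$ of $X_0$ produces the desired nonempty open subset of $X_0$ on which both assertions of the lemma hold.

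The main obstacle I foresee, if ``sufficiently general'' is to be interpreted as holding on a dense open subset of every irreducible component of $X_0$, is dealing with possibly reducible $X_0$. I would decompose $X_0 = \bigcup_j Y^{(j)}$ into irreducible components and use that the density of $GX_0 = \bigcup_j GY^{(j)}$ in $X$ forces, for each irreducible component $X^{(i)}$ of $X$, the existence of some $Y^{(j)}$ with $\overline{GY^{(j)}} = X^{(i)}$. For such a dominant $Y^{(j)}$, generic smoothness applied to the dominant multiplication map $G \times Y^{(j)} \to X^{(i)}$ shows that a generic pair $(g, x_0)$ maps into the smooth locus of $X^{(i)}$; since $G$ is connected and hence fixes each irreducible component of $X$, and since any $g \in G$ preserves $X^{\reg}$, this translates back to the statement that a generic point of $Y^{(j)}$ already lies in $X^{\reg}$.
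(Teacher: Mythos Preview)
Your argument is essentially the paper's: the paper takes a $G$-stable dense open subset $U\subseteq X^{\reg}$ contained in the constructible dense set $GX_0$, and observes (as you do, via the identity $U=G\cdot(U\cap X_0)$) that $U\cap X_0$ is nonempty and open in $X_0$. The paper then asserts that $U\cap X_0$ is \emph{dense} in $X_0$---a point you rightly flag as needing care when $X_0$ is reducible---but offers no more justification for this than you do.

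Your proposed fix for the reducible case is incomplete. You show that each component $X^{(i)}$ of $X$ is dominated by \emph{some} component $Y^{(j)}$ of $X_0$, and that such a dominant $Y^{(j)}$ meets $X^{\reg}$ generically; but you do not address those components of $X_0$ that fail to dominate any component of $X$, and nothing in your argument rules these out. The missing ingredient is the Weyl-group machinery developed immediately afterwards in the paper. Lemma~\ref{lm:BijCor} gives $\overline{GZ}\cap V_0=WZ$ for each irreducible component $Z$ of $X_0$, and the transitivity argument in the proof of Lemma~\ref{lm:genX0hasNiceTangent} (which does not itself rely on the density assertion) then shows that $W$ acts transitively on the components of $X_0$ lying inside a fixed component $Y$ of $X$. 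Since at least one of these dominates $Y$, they all do; hence every component $Z$ of $X_0$ satisfies $\overline{GZ}=Y$ for some component $Y$ of $X$, and therefore meets $X^{\reg}$.
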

\begin{proof}
A sufficiently general point on $X_0$ lies in $X_0^{\reg}$. Since $GX_0$ is constructible and dense in $X$, it contains a $G$-stable dense open subset $U$ of $X^{\reg}$. The intersection of $U$ with $X_0$ is dense in $X_0$. Hence a sufficiently general point on $X_0$ lies in $X^{\reg}$. 
\end{proof}

Define the Weyl group $W$ by
$$
W=N_G(V_0)/Z_g(V_0)=\{g\in G|gV_0=V_0\}/\{g\in G|gw=w\forall w\in V_0\}.
$$
Then the finite group $W$ acts naturally on $V_0$. Consider the set $S$ of $G$-stable closed subvarieties $Y$ of $V$ such that $G(Y\cap V_0)$ is dense in $Y$ and the set $R$ of $W$-stable closed subvarieties of $V_0$. Consider the maps
$$
\begin{array}{ccccccccc}
\varphi\colon &S&\rightarrow&R&\mbox{and}&\psi\colon&R&\rightarrow&S\\
&Y&\mapsto&Y\cap V_0&&&Z&\mapsto&\overline{GZ}
\end{array}
$$
between these two sets.

\begin{lm}\label{lm:BijCor}
The bijective maps $\varphi$ and $\psi$ are mutual inverses.
\end{lm}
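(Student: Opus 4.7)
The plan is to split the claim into well-definedness of $\varphi$ and $\psi$ and verification of the two identities $\psi\circ\varphi=\mathrm{id}_S$ and $\varphi\circ\psi=\mathrm{id}_R$. Well-definedness of $\varphi$ is routine: $Y\cap V_0$ is closed in $V_0$, and because any representative of $w\in W$ lies in $N_G(V_0)$ and preserves the $G$-stable variety $Y$, the intersection is $W$-stable. Well-definedness of $\psi$ follows since $\overline{GZ}$ is closed and $G$-stable, while $Z\subseteq\overline{GZ}\cap V_0$ forces $G(\overline{GZ}\cap V_0)\supseteq GZ$, which is dense in $\overline{GZ}$ by construction. The first identity is then immediate: for $Y\in S$ the defining density condition gives $\psi(\varphi(Y))=\overline{G(Y\cap V_0)}=Y$.

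The second identity amounts to showing $\overline{GZ}\cap V_0=Z$ for $Z\in R$. One inclusion is trivial; for the reverse, my plan is to invoke the Chevalley-type restriction theorem from the theory of polar representations (Dadok--Kac): since, by Theorem~\ref{thm:Polar}, our setup is that of a stable polar representation, the restriction map yields an isomorphism $\CC[V]^G\xrightarrow{\sim}\CC[V_0]^W$. Using this I would establish the correspondence $I(\overline{GZ})\cap\CC[V]^G\longleftrightarrow I(Z)\cap\CC[V_0]^W$. The forward direction is trivial by restriction. The backward direction uses surjectivity of Chevalley restriction: a $W$-invariant $\bar f$ vanishing on $Z$ lifts to a $G$-invariant $f$ on $V$, which then satisfies $f(gz)=f(z)=\bar f(z)=0$ for all $g\in G$ and $z\in Z$, so $f$ vanishes on $\overline{GZ}$.

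Combining these pieces, take $v_0\in\overline{GZ}\cap V_0$. Every element of $I(\overline{GZ})\cap\CC[V]^G$ vanishes at $v_0$, so every element of $I(Z)\cap\CC[V_0]^W$ vanishes at $v_0$ as well. Finiteness of $W$ makes $\pi_W\colon V_0\to V_0/W=\mathrm{Spec}\,\CC[V_0]^W$ a finite quotient, and the $W$-stable closed subvariety $Z$ satisfies $Z=\pi_W^{-1}(\pi_W(Z))$. Points of $V_0$ killed by all $W$-invariants in $I(Z)$ are exactly those whose $\pi_W$-image lies in $\pi_W(Z)$, so $v_0\in Z$. The main obstacle is setting up the Chevalley-type isomorphism $\CC[V]^G\cong\CC[V_0]^W$; this rests on the polar-representation structure furnished by Theorem~\ref{thm:Polar} together with the results of Dadok and Kac.
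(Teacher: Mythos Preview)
Your proof is correct and follows essentially the same approach as the paper. Both arguments reduce $\psi\circ\varphi=\mathrm{id}_S$ to the defining density condition, and for $\varphi\circ\psi=\mathrm{id}_R$ both invoke the Chevalley-type restriction theorem of Dadok--Kac (the paper cites \cite[Theorem 2.9]{dadok-kac}) to lift $W$-invariant equations for $Z$ to $G$-invariant polynomials on $V$, which then vanish on $\overline{GZ}$ and hence cut out $Z$ again upon restriction to $V_0$; your phrasing via $Z=\pi_W^{-1}(\pi_W(Z))$ is just a repackaging of the paper's statement that $Z$ is defined by $W$-invariant polynomials.
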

\begin{proof}
Since $S$ consists of the $G$-stable closed subvarieties $Y$
of $V$ such that $Y$ equals the closure of $G(Y\cap V_0)$ in
$V$, we see that $\psi\circ\varphi$ is the identity map on
$S$. Let $Z$ be a $W$-stable closed subvariety of $V_0$. It
is clear that $Z\subseteq\varphi(\psi(Z))$ and we will show
that in fact $\varphi(\psi(Z))=Z$ holds. Since $Z$ is
$W$-stable and $W$ is finite, the variety $Z$ is defined by
$W$-invariant polynomials $f_1,\dots,f_n\in\CC[V_0]^W$. By
\cite[Theorem 2.9]{dadok-kac}, there exists $G$-invariant polynomials $g_1,\dots,g_n\in\CC[V]^G$ such that $f_i$ is the restriction of $g_i$ to $V_0$ for all $i\in\{1,\dots,n\}$. Since $g_1,\dots,g_n$ are $G$-invariant and $g_1(z)=\dots=g_n(z)=0$ for all $z\in Z$, we see that (the closure of) $GZ$ is contained in the zero set of the ideal generated by $g_1,\dots,g_n$. Hence
$$
\varphi(\psi(Z))=\overline{GZ}\cap V_0
$$
is contained in the zero set of the ideal generated by the restrictions of $g_1,\dots,g_n$ to $V_0$. This zero set is $Z$ and hence $\varphi(\psi(Z))\subseteq Z$. So we see that $\varphi\circ\psi$ is the identity map on $R$.
\end{proof}

\begin{lm} \label{lm:genX0hasNiceTangent}
A sufficiently general $x_0\in X_0$ satisfies $T_{x_0}X=T_{x_0}X_0 + T_{x_0}Gx_0$.
\end{lm}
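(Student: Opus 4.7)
The plan is to exploit the multiplication morphism $\mu\colon G\times X_0\to X$, $(g,x_0)\mapsto gx_0$, and apply generic smoothness in characteristic zero to transfer surjectivity of its derivative at a general point of $G\times X_0$ down to surjectivity at a point of the form $(1,x_0)$. The inclusion $T_{x_0}X_0+T_{x_0}Gx_0\subseteq T_{x_0}X$ is automatic from $X_0,Gx_0\subseteq X$; the work is the reverse inclusion.

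First I would note that, exactly as in the proof of Lemma~\ref{lm:GV0}, the derivative of $\mu$ at $(1,x_0)$ is the linear map $(A,u_0)\mapsto Ax_0+u_0$ from $\g\oplus T_{x_0}X_0$ to $T_{x_0}X$, whose image is $\g x_0+T_{x_0}X_0=T_{x_0}Gx_0+T_{x_0}X_0$. So the desired equality at $x_0$ is equivalent to the surjectivity of $d\mu_{(1,x_0)}$ onto $T_{x_0}X$ (using that $x_0\in X_0^{\reg}\cap X^{\reg}$, which is automatic for generic $x_0$ by Lemma~\ref{lm:genX0isXreg}). Thus it is enough to produce a dense subset of $X_0$ on which $(1,x_0)$ is a smooth point of $\mu$.

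Next, by the hypothesis that $GX_0$ is dense in $X$, the morphism $\mu$ is dominant onto $X$ in the sense that its image meets every irreducible component of $X$ in a dense subset. Restricting to a generic point $x_0\in X_0$, which lies in a unique irreducible component $X_0^{(j)}$ of $X_0$ and in a unique irreducible component $X^{(i)}$ of $X$, one has $\overline{GX_0^{(j)}}=X^{(i)}$, so the induced map $\mu^{(j)}\colon G\times X_0^{(j)}\to X^{(i)}$ is a dominant morphism of irreducible varieties. Generic smoothness (characteristic zero) then provides a dense open subset $U\subseteq G\times X_0^{(j)}$ on which $d\mu^{(j)}$ is surjective. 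The key observation is that $\mu^{(j)}$ is equivariant for the action of $G$ on itself by left translation and on $X^{(i)}$ via the original action: if $d\mu^{(j)}$ is surjective at $(g,x_0)$, then translating by $g^{-1}$ on the first factor, it is surjective at $(1,x_0)$. Hence $U$ is stable under left translation in the first factor, so $U=G\times Y^{(j)}$ for some dense open $Y^{(j)}\subseteq X_0^{(j)}$, and for every $x_0\in Y^{(j)}$ the derivative $d\mu_{(1,x_0)}$ is surjective onto $T_{x_0}X^{(i)}=T_{x_0}X$.

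The main (mild) obstacle is just the bookkeeping with irreducible components of $X$ and $X_0$ to make sure that generic smoothness is applied to a dominant morphism of irreducible varieties; after that, the $G$-equivariance argument is the essential idea, and it mirrors the flavor of Lemmas~\ref{lm:GV0} and \ref{lm:Move}. Taking the union over components of $X_0$ yields a dense open subset of $X_0$ on which the equality $T_{x_0}X=T_{x_0}X_0+T_{x_0}Gx_0$ holds.
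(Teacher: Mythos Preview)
Your overall strategy---reduce to irreducible pieces, show the multiplication map $G\times Z\to Y$ is dominant, then use generic smoothness plus $G$-equivariance to pull surjectivity of the derivative back to points of the form $(1,z)$---is exactly the paper's strategy. The equivariance argument and the identification of the image of $d\mu_{(1,x_0)}$ are both correct.

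The gap is the sentence ``one has $\overline{GX_0^{(j)}}=X^{(i)}$''. You treat this as bookkeeping, but it is the substantive step. The hypothesis only says $GX_0$ is dense in $X$; for a fixed component $Y=X^{(i)}$ of $X$ with $Y\cap X_0=Z_1\cup\dots\cup Z_k$, this gives that $G(Z_1\cup\dots\cup Z_k)$ is dense in $Y$, hence $\overline{GZ_i}=Y$ for \emph{some} $i$. It does not immediately give $\overline{GZ_j}=Y$ for \emph{every} $j$, and that is what you need, since ``sufficiently general $x_0\in X_0$'' must cover a dense open in every component of $X_0$. A priori one could imagine some $Z_j$ with $\overline{GZ_j}\subsetneq Y$, in which case $\mu^{(j)}\colon G\times Z_j\to Y$ is not dominant and your generic smoothness argument does not apply to points of $Z_j$.

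The paper closes this gap using the polar structure: the Weyl group $W=N_G(V_0)/Z_G(V_0)$ permutes the $Z_j$, and Lemma~\ref{lm:BijCor} (which ultimately rests on the Chevalley-type restriction theorem $\CC[V]^G\to\CC[V_0]^W$ from \cite{dadok-kac}) forces $Y\cap V_0=\bigcup_{w\in W}wZ_i$, so $W$ acts transitively on $\{Z_1,\dots,Z_k\}$ and hence $\overline{GZ_j}=Y$ for all $j$. This is not a formality; it genuinely uses that $V_0$ is a Cartan subspace of a polar representation, not just any linear slice. Once you supply this step, your proof coincides with the paper's.
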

\begin{proof}
By Lemma \ref{lm:genX0isXreg}, we see that sufficiently
general points of $X_0$ are contained in at most one
irreducible component of $X$. Therefore each irreducible
component of $X_0$ is contained in precisely one irreducible
component of $X$. Let $Y$ be an irreducible component of $X$
and let $Z_1,\dots,Z_k$ be the irreducible components of
$X_0$ contained in $Y$. Then the Weyl group $W$ acts on the
set $\{Z_1,\dots,Z_k\}$. Since $GX_0$ is dense in $X$, we
see that $G(Z_1\cup\dots\cup Z_k)$ must be dense in $Y$. So
$GZ_i$ must be dense in $Y$ for some $i\in\{1,\dots,k\}$. By
the previous lemma, for this $i$ we have 
$$
Z_1\cup\dots\cup Z_k=Y\cap V_0=\bigcup_{g\in W}gZ_i
$$
and hence $W$ must act transitively on $\{Z_1,\dots,Z_k\}$.
In particular, we see that $GZ_j$ is in fact dense in $Y$ for all
$j\in\{1,\dots,k\}$.

Take $Z=Z_j$ for any $j\in\{1,\dots,k\}$. Then the multiplication map $G\times Z\rightarrow Y$ is dominant and $G$-equivariant when we let $G$ act on itself by left multiplication. Therefore its derivative at $(1,z)$ is surjective for $z\in Z$ sufficiently general. This means that $T_{z}Y=T_{z}Z + T_{z}Gz$ for $z\in Z$ suffciently general. Since this holds for all components $Z$ of $X_0$, we see that $T_{x_0}X=T_{x_0}X_0 + T_{x_0}Gx_0$ for $x_0\in X_0$ suffciently general.
\end{proof}

\begin{lm}\label{lm:CritImpliesGeneral}
Let $Y$ be a closed subvariety in a complex affine space $V$. Let $U$ be a dense open subset of $Y$ and let $Z$ be its complement in $Y$. Then for $v\in V$ sufficiently general, all ED critical points $y\in Y$ for $v$ lie in $U$.
\end{lm}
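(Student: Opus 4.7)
The plan is to bound the dimension of the incidence correspondence of ``bad'' critical pairs and deduce that its image in $V$ lies in a proper closed subvariety. Specifically, set
$$
\mathcal{E}_Z := \{(y,v) \in (Y^{\reg} \cap Z) \times V \mid v - y \perp T_y Y\}
$$
and let $\pi_2 \colon \mathcal{E}_Z \to V$ be the projection onto the second factor. I aim to show that $\dim \overline{\pi_2(\mathcal{E}_Z)} \le \dim V - 1$, so that its complement is a dense open set of data points $v$ which, by construction, have no ED critical point on $Z$.

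The key observation is that since $U$ is dense in $Y$, the closed set $Z$ contains no irreducible component of $Y$. Writing $Y = Y_1 \cup \cdots \cup Y_r$ for the decomposition into irreducible components, this gives $\dim(Z \cap Y_i) \le \dim Y_i - 1$ for each $i$. Moreover, a point $y \in Y^{\reg}$ lies on a unique irreducible component $Y_i$ and has Zariski tangent space $T_y Y = T_y Y_i$ of dimension $d_i := \dim Y_i$. I accordingly stratify $\mathcal{E}_Z$ as a disjoint union of the pieces
$$
\mathcal{E}_{Z,i} := \{(y,v) \mid y \in Y^{\reg} \cap Y_i \cap Z,\ v - y \perp T_y Y_i\}.
$$
The first projection $\mathcal{E}_{Z,i} \to Y^{\reg} \cap Y_i \cap Z$ is surjective with fibers equal to affine spaces of dimension $\dim V - d_i$, so
$$
\dim \mathcal{E}_{Z,i} \le (d_i - 1) + (\dim V - d_i) = \dim V - 1.
$$

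Taking the union over $i$ yields $\dim \mathcal{E}_Z \le \dim V - 1$, so $\pi_2(\mathcal{E}_Z)$ is a constructible subset of $V$ whose closure has dimension at most $\dim V - 1$; its complement is therefore a dense open subset of $V$, and any $v$ in this set has the required property. There is no serious obstacle here---this is a standard fiber-dimension argument---but one should handle the reducible case carefully: a naive global bound ``$\dim Z < \dim Y$'' could be misleading, whereas what is actually used is the componentwise version, applied after stratifying both $Z$ and the fibers of $\pi_1$ by the irreducible components of $Y$.
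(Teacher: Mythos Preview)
Your argument is correct and is the standard incidence-correspondence dimension count. The paper itself does not supply a proof but refers to \cite[Lemma 4.2]{drusvyatskiy-lee-ottaviani-thomas}; your write-up is essentially the argument one finds there, so there is no meaningful divergence to discuss.
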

\begin{proof}
See the proof of \cite[Lemma 4.2]{drusvyatskiy-lee-ottaviani-thomas}.
\end{proof}

\begin{lm}\label{lm:X0critIsXcrit}
Let $v_0\in V_0$ be sufficiently general. Then any ED critical point on $X_0$ for $v_0$ is an ED critical point on $X$ for $v_0$.
\end{lm}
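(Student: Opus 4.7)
The plan is to unpack what it means to be an ED critical point in each setting and reduce the statement to a tangent space decomposition that has already been established. An ED critical point on $X_0$ for $v_0$ is a point $x_0 \in X_0^{\reg}$ with $v_0 - x_0 \perp T_{x_0} X_0$; an ED critical point on $X$ for $v_0$ is a point $x_0 \in X^{\reg}$ with $v_0 - x_0 \perp T_{x_0} X$. So I need to upgrade smoothness from $X_0$ to $X$, and I need to promote orthogonality from $T_{x_0} X_0$ to $T_{x_0} X$.

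First I would apply Lemma~\ref{lm:CritImpliesGeneral} to the dense open subset $U \subseteq X_0$ on which the conclusions of Lemma~\ref{lm:genX0isXreg} (so $U \subseteq X^{\reg}$) and Lemma~\ref{lm:genX0hasNiceTangent} (so $T_{x_0} X = T_{x_0} X_0 + T_{x_0} G x_0$) both hold; I also intersect with the dense open subset of $V_0$ on which the orthogonal decomposition in condition~(1) of the Main Theorem is valid. Then for $v_0 \in V_0$ sufficiently general, every ED critical point $x_0$ on $X_0$ for $v_0$ lies in $U$, so $x_0 \in X^{\reg}$ and $T_{x_0} X = T_{x_0} X_0 + T_{x_0} G x_0$.

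It remains to show $v_0 - x_0 \perp T_{x_0} X$. By the decomposition just obtained, it suffices to verify perpendicularity against $T_{x_0} X_0$ and against $T_{x_0} G x_0 = \g x_0$ separately. The first holds by hypothesis on $x_0$. For the second, observe that $v_0 - x_0$ lies in $V_0$ (since both $v_0$ and $x_0$ do), and the remark following the Main Theorem states that $V_0$ and $\g w$ are perpendicular for every $w \in V_0$; applying this with $w = x_0$ yields $v_0 - x_0 \perp \g x_0$, as required.

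I do not expect any serious obstacle here: the genuinely substantive work is already packaged into Lemmas~\ref{lm:genX0isXreg}, \ref{lm:genX0hasNiceTangent}, and~\ref{lm:CritImpliesGeneral}, together with the ``universal'' orthogonality $V_0 \perp \g V_0$ recorded in the remark. The one point that deserves care is ensuring that all the ``sufficiently general'' conditions on $v_0$ can be imposed simultaneously; this is routine, since each is the condition of avoiding a proper closed subset of $V_0$, and a finite union of such subsets is again proper and closed.
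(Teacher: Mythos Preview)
Your proof is correct and follows essentially the same route as the paper's: both combine Lemmas~\ref{lm:genX0isXreg}, \ref{lm:genX0hasNiceTangent}, and~\ref{lm:CritImpliesGeneral} to ensure each ED critical point $x_0$ lies in $X^{\reg}$ with $T_{x_0}X = T_{x_0}X_0 + T_{x_0}Gx_0$, then verify $v_0 - x_0 \perp T_{x_0}X$ by checking the two summands separately, using criticality for the first and the universal orthogonality $V_0 \perp \g V_0$ from the remark for the second. The only difference is cosmetic: your extra requirement that $v_0$ avoid the locus where condition~(1) fails is harmless but not actually needed in this direction (it is used in the converse, Lemma~\ref{lm:XcritIsX0crit}).
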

\begin{proof}
By combining the previous three lemmas, we may assume that all ED critical points $x_0\in X_0$ for $v_0$ are not only elements of $X_0^{\reg}$ but also of $X^{\reg}$ and that they satisfy $T_{x_0}X=T_{x_0} X_0+T_{x_0} Gx_0$. Let $x_0$ be an ED critical points of $v_0$. Then $v_0-x_0 \perp T_{x_0} X_0$ by criticality and $v_0-x_0 \in V_0 \perp T_{x_0} G x_0$ by the conditions of the Main Theorem (here we do not need that $T_{x_0} G x_0$ is the orthogonal complement of $V_0$---this may not be true---but only that it is contained in that complement). We see that 
$$
v_0-x_0 \perp T_{x_0} X_0+T_{x_0} G x_0=T_{x_0}X
$$
and hence $x_0$ is an ED critical point on $X$ for $v_0$.
\end{proof}

\begin{lm}\label{lm:XcritIsX0crit}
Let $v_0\in V_0$ be sufficiently general. Then any ED critical point on $X$ for $v_0$ is an ED critical point on $X_0$ for $v_0$.
\end{lm}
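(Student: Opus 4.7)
Let $x \in X^{\reg}$ be an ED critical point on $X$ for $v_0$, so $v_0 - x \perp T_x X$. My plan has two parts: (a) show $x \in V_0$, whence $x \in X_0$; and (b) show $x \in X_0^{\reg}$. Once both hold, $T_x X_0 \subseteq T_x X$ at a smooth point of $X_0$, combined with $v_0 - x \perp T_x X$, gives $v_0 - x \perp T_x X_0$, making $x$ an ED critical point on $X_0$ for $v_0$.

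Part (a) is the argument of Lemma~\ref{lm:X0critIsXcrit} run in reverse. Since $T_x Gx = \g x \subseteq T_x X$, the critical condition yields $v_0 - x \perp \g x$. The $G$-invariance of the bilinear form makes $\g$ act skew-adjointly, so $\langle x | Ax\rangle = 0$ and hence $\langle A v_0 | x\rangle = -\langle v_0 | Ax\rangle = 0$ for every $A \in \g$. Thus $x \perp \g v_0 = T_{v_0}Gv_0$, and by the Main Theorem hypothesis the orthogonal complement of $T_{v_0}Gv_0$ is precisely $V_0$ for sufficiently general $v_0 \in V_0$, giving $x \in V_0 \cap X = X_0$.

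For part (b), write $\Sigma := X_0 \setminus X_0^{\reg}$ for the singular locus of $X_0$, a $W$-stable closed subvariety of $V_0$. By Lemma~\ref{lm:BijCor}, $\psi(\Sigma) = \overline{G\Sigma}$ is a $G$-stable closed subvariety of $X$ with $\overline{G\Sigma} \cap V_0 = \Sigma$. No irreducible component $X_i$ of $X$ can be contained in $\overline{G\Sigma}$, since that would force $X_i \cap V_0 \subseteq \Sigma$ while by the Main Theorem hypothesis and the argument in Lemma~\ref{lm:genX0hasNiceTangent} $X_i$ contains an irreducible component of $X_0$ whose smooth locus is dense in it. Working component-by-component (using that $G$ is connected, so each component of $X$ is $G$-stable), this gives $\dim \overline{G\Sigma} < \dim X$. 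Therefore $U := X^{\reg} \setminus \overline{G(\Sigma \cap X^{\reg})}$ is a $G$-stable dense open subset of $X^{\reg}$. Applying Lemma~\ref{lm:CritImpliesGeneral} with $Y = X$ and this $U$, the bad locus $V_{\mathrm{bad}} := \{v \in V : v \text{ has a critical point on } X \text{ outside } U\}$ lies in a proper Zariski-closed subset of $V$; by Lemma~\ref{lm:Move} it is $G$-stable, and so is its closure $Z \subsetneq V$. Since $GV_0$ is dense in $V$ by Lemma~\ref{lm:GV0}, $V_0 \not\subseteq Z$, so $Z \cap V_0 \subsetneq V_0$. For $v_0 \in V_0$ outside $Z \cap V_0$, every ED critical point of $v_0$ on $X$ lies in $U$; combined with part (a), this places $x$ in $U \cap X_0 \subseteq X^{\reg} \cap X_0^{\reg}$.

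The delicate step is part (b): passing from ``generic $v \in V$'' in Lemma~\ref{lm:CritImpliesGeneral} to ``generic $v_0 \in V_0$'' by invoking $G$-stability of the bad locus and the density of $GV_0$, with the key enabler being the dimension bound $\dim \overline{G\Sigma} < \dim X$ supplied by Lemma~\ref{lm:BijCor}.
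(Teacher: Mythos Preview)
Your part (a) is exactly the paper's argument: skew-adjointness of $\g$ turns $v_0-x\perp\g x$ into $x\perp\g v_0$, and the orthogonal splitting forces $x\in V_0$.

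Your part (b), however, goes strictly beyond what the paper does. The paper's proof stops after establishing $x\in X_0$ and $v_0-x\perp T_xX\supseteq T_xX_0$, and simply declares $x$ an ED critical point on $X_0$ without verifying $x\in X_0^{\reg}$. Strictly speaking this is a gap in the paper, since its own definition of ED critical point requires smoothness. Your argument fills it correctly: using Lemma~\ref{lm:BijCor} to get $\overline{G\Sigma}\cap V_0=\Sigma$, deducing that $\overline{G\Sigma}$ contains no irreducible component of $X$ (via the component analysis from Lemma~\ref{lm:genX0hasNiceTangent}), then combining Lemma~\ref{lm:CritImpliesGeneral} with the $G$-stability of the bad locus and the density of $GV_0$ (Lemma~\ref{lm:GV0}) to pass from generic $v\in V$ to generic $v_0\in V_0$.

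Two small points of polish. First, the inequality $\dim\overline{G\Sigma}<\dim X$ is not quite the right formulation if $X$ has components of different dimensions; what you actually establish (and what you need) is that $\overline{G\Sigma}$ contains no irreducible component of $X$, so that $X\setminus\overline{G\Sigma}$ is dense and open. Second, there is no need for the auxiliary set $\Sigma\cap X^{\reg}$ in the definition of $U$; taking $U:=X^{\reg}\setminus\overline{G\Sigma}$ already gives a $G$-stable dense open subset of $X$ whose points lie outside $\Sigma$, and the rest of the argument goes through unchanged.
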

\begin{proof}
Let $x \in X$ be an ED critical point for $v_0$. Then in particular $v_0-x\perp T_x Gx=\g x$. Together with $x\perp\g x$, which holds by orthogonality of the representation, this implies that $v_0 \perp \g x$. Using once more the orthogonality of the representation, we see that $\langle x|\g v_0\rangle=-\langle\g x|v_0\rangle=\{0\}$. So $x \perp T_{v_0} G v_0$. Since $v_0$ is sufficiently general in $V_0$, the vector space $V$ is the orthogonal direct sum of $V_0$ and $T_{v_0}Gv_0$ and therefore $x$ is an element of $V_0$. So since also $x \in X$, we have $x \in X_0$. Since $v_0-x \perp T_x X \supseteq T_x X_0$, we find that $x\in X_0$ is an ED critical point for $v_0$. 
\end{proof}

\begin{proof}[Proof of the Main Theorem]
By Lemmas~\ref{lm:GV0} and~\ref{lm:Move} we may assume that the sufficiently general point on $V$ is in fact a sufficiently general point $v_0$ on $V_0$. The previous two lemmas now tell us that the ED critical points for $v_0$ on $X$ and on $X_0$ are the same. Hence the ED degrees of $X$ in $V$ and $X_0$ in $V_0$ are equal. 
\end{proof}

\begin{ex} \label{ex:adjoint}
Let $G$ be a complex semisimple algebraic group acting on
its Lie algebra $V=\g$ by conjugation, let $V_0$ be a Cartan
subalgebra of $\g$ and let $W$ be the Weyl group. In Section
\ref{section:examples}, we show that $V$ satisfies the
conditions of the Main Theorem. Suppose $X$ is the closed
$G$-orbit of a sufficiently general point $v\in V_0$. Then
the intersection $X_0=X\cap V_0$ is a single $W$-orbit by
\cite[Theorem 2.8]{dadok-kac}. Since $X_0$ is the $W$-orbit of a sufficiently general point of $V_0$, it is a set of size $\# W$. So the ED degree of $X$ equals $\#W$.

Since $v$ is sufficienlty general, the codimension of $X$ in
$V$ equals the dimension of $V_0$. So the degree of the
variety $X$, i.e. the cardinality of $X\cap V'$ for a
sufficiently general subspace $V'$ of $V$ with
$\dim(V')=\dim(V_0)$, is at least the cardinality of $X\cap
V_0$, which is the ED degree of $X$. Let $f_1,\dots,f_n$ be
a set of invariant polynomial generating the algebra
$\CC[V]^G$. Then, since $X$ is a closed $G$-orbit, we see
that $X$ is defined by the equations $f_1=f_1(v), \dots,
f_n=f_n(v)$. Therefore the degree of $X$ is at most the
product of the degrees of $f_1$, \dots, $f_n$. By Theorem
\cite[Theorem 3.19]{humphreys}, this product equals the size of the Weyl group $W$. So the degree and ED degree of $X$ are equal.
\end{ex}

\section{Testing for the conditions of the Main Theorem} \label{section:testing}

In the paper \cite{dadok}, the irreducible polar
representations of compact Lie groups are completely
classified. Now suppose we have a not-necessarily
irreducible orthogonal representation $V$ of a reductive
algebraic group $G$. We would like to be able to test
whether $V$ satisfies the conditions of the Main Theorem. In
\cite[Section 2]{dadok-kac}, some methods are given. In this section, we describe one more such method.

\begin{lm}
For sufficiently general $v \in V$, the tangent space $\g v$ of $v$ to its orbit is maximal-dimensional and non-degenerate with respect to the bilinear form.
\end{lm}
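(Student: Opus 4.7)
My plan is to split the claim into two Zariski-open conditions on $v \in V$ and produce one point satisfying both. The maximum-dimension part follows from standard semi-continuity: the Lie-algebra stabilizer $\g_v := \{X \in \g : Xv = 0\}$ is the kernel of the linear map $\phi_v\colon \g \to V$ sending $X \mapsto Xv$. The rank of $\phi_v$ is lower semi-continuous in $v$, so $\dim \g v = \dim \g - \dim \g_v$ is lower semi-continuous and attains its maximum $m$ on a Zariski-open dense subset $U \subseteq V$.

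For non-degeneracy, I would introduce the symmetric bilinear form $B_v$ on $\g$ defined by $B_v(X,Y) := \langle Xv\,|\,Yv\rangle$; its Gram matrix in a fixed basis of $\g$ depends polynomially (in fact quadratically) on $v$. Since $\ker B_v \supseteq \g_v$, one has $\mathrm{rank}(B_v) \leq m$ on $U$, with equality precisely when the restriction of $\langle-|-\rangle$ to $\g v$ is non-degenerate. The locus in $U$ where equality holds is Zariski-open, cut out by the non-vanishing of some $m \times m$ minor of $B_v$, so it suffices to exhibit a single $v^* \in U$ with $\g v^*$ non-degenerate.

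To produce such $v^*$, I would appeal to Proposition~\ref{prop:orthrep=complxiorthrep}: our $V$ is the complexification of a real orthogonal representation $V_\RR$ of a maximal compact subgroup $K \subseteq G$, equipped with a positive-definite inner product whose complexification is $\langle-|-\rangle$. Writing $\g = \kappa \oplus i\kappa$, for every $v \in V_\RR$ one has $\g v = (\kappa v) \otimes_\RR \CC$, and the restriction of $\langle-|-\rangle$ to $\g v$ is the complexification of the positive-definite inner product restricted to $\kappa v$; in particular, it is non-degenerate. Since $V_\RR$ is Zariski-dense in $V$, the open set $U$ must meet $V_\RR$, yielding a $v^* \in V_\RR \cap U$ with both $\dim \g v^* = m$ and $\g v^*$ non-degenerate. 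The main step requiring verification is the identification $\g v = (\kappa v) \otimes_\RR \CC$ together with the compatibility of the two forms, and both are straightforward consequences of the construction of $V_\RR$ in Proposition~\ref{prop:orthrep=complxiorthrep}.
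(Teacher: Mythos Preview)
Your proof is correct and follows essentially the same strategy as the paper: use lower semi-continuity for the maximal-dimension part, invoke Proposition~\ref{prop:orthrep=complxiorthrep} to obtain a real form $V_{\RR}$ with a positive-definite inner product, and observe that for $v\in V_{\RR}$ the identity $\g v=(\k v)\otimes_{\RR}\CC$ forces the restricted form to be non-degenerate. The only cosmetic difference is how openness of the non-degeneracy locus is packaged: the paper fixes elements $\varphi_1,\dots,\varphi_n\in\g$ with $\varphi_i w$ a basis of $\g w$ and tracks the Gram matrix of $(\varphi_1 v,\dots,\varphi_n v)$, whereas you work with the pulled-back form $B_v$ on all of $\g$ and its rank; these encode the same Zariski-open condition.
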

\begin{proof}
By Proposition \ref{prop:orthrep=complxiorthrep}, we know that $V$ is the complexification of a real subspace $V_{\RR}$ and that the $G$-invariant bilinear form $\langle-|-\rangle$ is the extension of a positive definite inner product on $V_{\RR}$. Since $V_{\RR}$ is dense in $V$ and the set of $v\in V$ such that the dimension of $\g v$ is maximal is open and dense, there is an element $w$ in the intersection. Note that $\g w=\k w\otimes_{\RR}\CC$. Since $\langle-|-\rangle$ is an inner product on $V_{\RR}$, its restriction to $\k w$ is non-degenerate. Therefore the restriction of $\langle-|-\rangle$ to $\g w$ is non-degenerate as well.

Pick $\varphi_1,\dots,\varphi_n\in\g$ such that $\varphi_1w,\dots,\varphi_nw$ form a basis of $\g w$. Then the set of $v\in V$ such that $\varphi_1v,\dots,\varphi_nv$ are linearly independent and the restriction of $\langle-|-\rangle$ to their span is non-degenerate is an non-empty open subset of $V$. Since the dimension of $\g w$ is maximal, we see that for every element $v$ in this set, the tangent space $\g v$ of $v$ to its orbit is spanned by $\varphi_1v,\dots,\varphi_nv$. So for sufficiently general $v\in V$, the vector space $\g v$ is non-degenerate with respect to the bilinear form.
\end{proof}

\begin{lm}\label{lm:Transitive}
A subspace $V_0$ as in the Main Theorem exists if and only if for sufficiently general $v \in V$ and for all $u_1,u_2 \perp \g v$ we have $u_1 \perp\g u_2$.
\end{lm}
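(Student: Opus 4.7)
My plan is to prove each direction of the equivalence directly. For the forward implication I will use Lemma~\ref{lm:GV0} to transfer a sufficiently general $v\in V$ back to a sufficiently general $v_0\in V_0$; for the converse I will take $V_0:=(\g v)^\perp$ for a sufficiently general $v\in V$ and use the previous lemma to ensure this subspace has the right properties.

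Forward direction: assume a subspace $V_0$ as in the Main Theorem exists. By Lemma~\ref{lm:GV0} (equivalently, by the dominance of the multiplication map $G\times V_0\to V$ used in its proof) a sufficiently general $v\in V$ can be written as $v=gv_0$ with $g\in G$ and $v_0\in V_0$ sufficiently general. The standard compatibility $g(Aw)=(\Ad(g)A)(gw)$ between the $G$- and $\g$-actions yields $\g v=g(\g v_0)$, and $G$-invariance of the bilinear form gives $(\g v)^\perp=gV_0$. For $u_i=gw_i$ with $w_i\in V_0$ the desired statement $u_1\perp\g u_2$ reduces, again by $G$-invariance, to $w_1\perp\g w_2$; this holds by the Remark following the Main Theorem, which guarantees $V_0\perp\g v_0'$ for \emph{every} $v_0'\in V_0$ rather than only sufficiently general ones.

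Reverse direction: fix a sufficiently general $v\in V$ so that, by the previous lemma, $\g v$ is non-degenerate and of maximal dimension, and so that the hypothesis of the present lemma applies at $v$. Set $V_0:=(\g v)^\perp$. Non-degeneracy of $\g v$ transfers to $V_0$, so $V=\g v\oplus V_0$ is an orthogonal direct sum. Orthogonality of the representation gives $v\perp\g v$, hence $v\in V_0$, so the maximal-orbit-dimension locus in $V$ meets and is therefore dense in $V_0$. The hypothesis at $v$ reads: $u_1\perp\g u_2$ for all $u_1,u_2\in V_0$, which says exactly $\g v_0\subseteq V_0^\perp$ for every $v_0\in V_0$. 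For sufficiently general $v_0\in V_0$ the dimension count $\dim\g v_0=\dim\g v=\dim V_0^\perp$ upgrades this to the equality $\g v_0=V_0^\perp$, and non-degeneracy of $V_0$ then yields $V=V_0\oplus\g v_0$ orthogonally, verifying the Main Theorem's condition.

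The main obstacle is tracking the various ``sufficiently general'' loci and checking their compatibilities. In the forward direction it is essential that $V_0\perp\g v_0'$ holds for \emph{all} $v_0'\in V_0$, since $g^{-1}u_2$ need not be generic; this is precisely the content of the Remark following the Main Theorem. In the reverse direction one must verify that $V_0=(\g v)^\perp$ inherits a dense open subset where $\dim\g v_0$ attains its maximum, which follows from openness together with the non-emptiness witness $v\in V_0$.
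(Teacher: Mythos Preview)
Your proof is correct and follows essentially the same route as the paper's. In the forward direction both arguments write a generic $v$ as $gv_0$ via the density of $GV_0$, identify $(\g v)^\perp$ with $gV_0$, and then invoke the fact that $V_0\perp\g w$ for \emph{all} $w\in V_0$; in the reverse direction both set $V_0=(\g v)^\perp$ for a sufficiently general $v$, use non-degeneracy of $\g v$, and conclude $\g v_0=\g v$ for generic $v_0\in V_0$ via the containment $\g v_0\subseteq V_0^\perp=\g v$ together with the dimension count. You are somewhat more explicit than the paper about the step $v\in V_0$ and about why the maximal-orbit-dimension locus meets $V_0$, but the underlying argument is the same.
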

\begin{proof}
Suppose such a subspace $V_0$ exists. Let $v \in V$ be sufficiently general and let $u_1,u_2 \perp \g v$. For $v_0\in V_0$ sufficiently general and for all $g\in G$, the vector space $V$ is the orthogonal direct sum of $gV_0$ and $g\g v_0=\g(gv_0)$. Since $GV_0$ contains an open dense subset of $V$, we may assume that $v=gv_0$ for such $v_0$ and $g$. So we see that $u_1,u_2\in gV_0$. We have $V_0\perp \g u$ for all $u\in V_0$. Therefore we have $gV_0\perp\g u$ for all $u\in gV_0$ and hence $u_1\perp\g u_2$. 

Let $v\in V$ be such that the $\g v$ is maximal-dimensional, the restriction of the $G$-invariant bilinear form $\langle-|-\rangle$ to $\g v$ is non-degenerate and $u_1 \perp\g u_2$ for all $u_1,u_2 \perp \g v$. Let $V_0$ be the orthogonal complement of $\g v$. Then we see that $V_0$ is perpendicular to $\g v_0$ for all $v_0\in V_0$. Let $v_0\in V_0$ be sufficiently general. Then we have $\g v_0=\g v$ and hence $V$ is the orthogonal direct sum of $V_0$ and $\g v_0$. 
\end{proof}

\begin{lm}\label{lm:RationalCoeff}
Let $W$ be a finite-dimensional complex vector space, let 
$$
f_1,\dots,f_m\colon V\rightarrow W
$$
be linear maps and let $w\in W$ be an element. Then the following are equivalent:
\begin{itemize}
\item[(i)] For $v\in V$ sufficiently general, we have $w\in\span_{\CC}(f_1(v),\dots,f_m(v))$.
\item[(ii)] We have $1\otimes w\in\span_{\CC(V^*)}(f_1,\dots,f_m)\subset \CC(V^*)\otimes_{\CC}W$.
\end{itemize}
\end{lm}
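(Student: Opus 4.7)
The plan is to translate both conditions into statements about matrix rank and reduce to the standard fact that, for a matrix with polynomial entries, the rank over the function field $\CC(V^*)$ coincides with the generic rank of its pointwise specializations.

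The direction (ii)$\Rightarrow$(i) is essentially tautological: a presentation $1 \otimes w = \sum_i g_i f_i$ with $g_i \in \CC(V^*)$ makes sense on the dense open $U \subseteq V$ on which every $g_i$ is regular, and evaluating at any $v \in U$ gives $w = \sum_i g_i(v) f_i(v)$, placing $w$ in $\span_{\CC}(f_1(v),\dots,f_m(v))$. For (i)$\Rightarrow$(ii) I would fix a basis of $W$ and assemble the $f_i$ into a $\dim(W) \times m$ matrix $M$ with entries in $V^* \subseteq \CC[V]$, alongside the augmented matrix $M' := (M \mid w)$ obtained by adjoining $w$ as a constant column. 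Condition (i) is exactly the statement that $\mathrm{rank}\, M(v) = \mathrm{rank}\, M'(v)$ for $v$ in a dense open subset of $V$, and I would aim to upgrade this pointwise rank equality to an equality of ranks over $\CC(V^*)$; from there the column $w$ is at once a $\CC(V^*)$-linear combination of the columns of $M$, and reassembling this combination coordinate-freely produces the desired relation $1 \otimes w = \sum_i g_i f_i$ in $\CC(V^*) \otimes W$.

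The only substantive ingredient is therefore the rank comparison, and it is entirely standard: the rank of a matrix with entries in $\CC(V^*)$ equals the largest $r$ for which some $r \times r$ minor is nonzero in $\CC(V^*)$, and for a matrix of polynomial entries a minor is nonzero in $\CC(V^*)$ precisely when it is nonvanishing at a generic point of $V$. Applying this to both $M$ and $M'$ closes the argument, and I anticipate no real obstacle beyond careful bookkeeping between the coordinate-free tensor-product formulation and the matrix description.
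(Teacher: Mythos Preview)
Your proposal is correct. The paper's argument for (i)$\Rightarrow$(ii) differs slightly in execution: rather than comparing the ranks of $M$ and the augmented matrix $M'$ directly, the paper first reduces without loss of generality to the case where $f_1(v),\dots,f_m(v)$ are linearly independent for general $v$, then chooses fixed vectors $w_1,\dots,w_k\in W$ extending them to a basis, inverts the resulting square matrix over $\CC(V^*)$ to obtain the unique expansion of $w$, and finally argues that the coefficients of $w_1,\dots,w_k$ must vanish identically. Your rank-comparison route is a bit more direct, as it bypasses both the preliminary reduction and the basis extension; the paper's version, on the other hand, makes the coefficients $g_i$ explicit via a matrix inverse. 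Both are routine linear algebra over $\CC(V^*)$ and rest on the same underlying fact that the rank of a polynomial matrix over the function field agrees with its generic pointwise rank.
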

\begin{proof}
Suppose that 
$$
1\otimes w = c_1f_1+\dots+c_mf_m
$$
for some $c_1,\dots,c_m\in \CC(V^*)$. Then 
$$
w=c_1(v)f_1(v)+\dots+c_m(v)f_m(v)
$$
for the dense open subset of $V$ consisting of all $v$ where $c_1,\dots,c_m$ can be evaluated.

For the converse, suppose that for $v\in V$ sufficiently general we know that $w$ is contained in the span of $f_1(v)$,\dots, $f_m(v)$. We may assume that $f_1(v),\dots,f_m(v)$ are linearly independent for $v\in V$ sufficiently general. Let $v\in V$ be such that $f_1(v),\dots,f_m(v)$ are linearly independent and $w$ is contained in their span. Choose $w_1,\dots,w_k\in W$ such that $f_1(v),\dots,f_m(v),w_1,\dots,w_k$ form a basis of $W$. Now note that $f_1(v),\dots,f_m(v),w_1,\dots,w_k$ form a basis of $W$ for $v\in V$ sufficiently general. By choosing a basis, we may assume that $W=\CC^{n+k}$. This gives us a morphism 
\begin{eqnarray*}
\varphi\colon V&\rightarrow&\CC^{(m+k)\times(m+k)}\\
v&\mapsto&\begin{pmatrix}f_1(v)&\dots&f_m(v)&w_1&\dots&w_k\end{pmatrix}
\end{eqnarray*}
such that $\varphi(v)$ is invertible for $v\in V$ sufficiently general. Consider the coefficients of $\varphi(v)$ as elements of the field $\CC(V^*)$ of rational functions on $V$. Then the matrix $\varphi(v)$ is invertible and $c(v)=\varphi(v)^{-1}w$ is a vector with coefficients in $\CC(V^*)$. We have
$$
w=\varphi(v)c(v)= c_1(v)f_1(v)+\dots+c_m(v)f_m(v)+c_{n+1}(v)w_1+\dots+c_{n+k}(v)w_k
$$
for $v\in V$ sufficiently. Since we also know that $f_1(v),\dots,f_m(v),w_1,\dots,w_k$ form a basis and that $w$ is contained in the span of $f_1(v),\dots,f_m(v)$ for $v\in V$ sufficiently general, we see that $c_{n+1},\dots,c_{n+k}$ all must be equal to the zero function. Hence
$$
1\otimes w=c_1f_1+\dots+c_mf_m
$$
is contained in the span of $f_1,\dots,f_m$ inside $\CC(V^*)\otimes W$.
\end{proof}

Now we combine the previous two lemmas to reduce checking the existence of $V_0$ to a linear algebra problem over $\CC(V^*)$. Take $U=W=V$ and consider $U$ and $W$ as affine spaces. Let $\varphi_1,\dots,\varphi_n$ form a basis of $\g$. By Lemma \ref{lm:Transitive}, we know that the representation $V$ satisfies the conditions of the Main Theorem if and only if, for $v\in V$ sufficiently general, the variety in $U\times W$ given by the linear equations 
$$
\langle u|\varphi_iv\rangle=\langle w|\varphi_iv\rangle=0,\quad i=1,\dots,n
$$
is contained in the variety given by the equations $\langle u|\varphi_j w\rangle=0$ for $j=1,\dots,n$. The latter holds if and only if the polynomials $\langle u|\varphi_j w\rangle$ are contained in the ideal $I$ of the coordinate ring $\CC[U\times W]$ generated by $\langle u|\varphi_iv\rangle$ and $\langle w|\varphi_iv\rangle$ for $i=1,\dots,n$. The polynomial $\langle u|\varphi_j w\rangle$ is homogeneous of degree $2$. So for a fixed $v\in V$, it is contained in $I$ if and only if
$$
\langle u|\varphi_j w\rangle\in \span_{\CC}\left(\CC[U\times W]_{(1)}\cdot\left\{\left.\langle u|\varphi_iv\rangle,\langle w|\varphi_iv\rangle\right|i=1,\dots,n\right\}\right).
$$
So by Lemma \ref{lm:RationalCoeff}, we see that $V$ satisfies the conditions of the Main Theorem if and only if
$$
\langle u|\varphi_j w\rangle\in \span_{\CC(V^*)}\left(\CC[U\times W]_{(1)}\cdot\left\{\left.\langle u|\varphi_iv\rangle,\langle w|\varphi_iv\rangle\right|i=1,\dots,n\right\}\right)
$$
for all $j\in\{1,\dots,n\}$. The latter condition can be checked efficiently on a computer, requiring as input the bilinear form $\langle-|-\rangle$ and the images in $\End(V)$ of a basis of $\g$.

\section{examples} \label{section:examples}

In this section we highlight some of the families of polar representations coming from \cite{dadok}. We also point out how some of these families are related by means of slice representations as defined in \cite{dadok-kac}. Our Main Theorem can be applied to each of these families, thus generalizing 
\cite[Theorem 4.11]{drusvyatskiy-lee-ottaviani-thomas}.

\begin{re}
A representation $V$ of a group $G$ satisfies the conditions of the Main Theorem if and only if the direct sum of $V$ with the trivial representation does.
\end{re}

\begin{re}
Let $V$ be the orthogonal direct sum of two representations $V_1$ and $V_2$ of $G$. Then if $V$ satisfies the conditions of the Main Theorem, so do $V_1$ and $V_2$.
\end{re}

\subsection{Adjoint representations} 

Let $G$ be a complex semisimple algebraic group acting on its Lie algebra $\g$ by conjugation. This representation is orthogonal with respect to the Killing form $B$ on $\g$ defined by
$$
B(v,w)=\Tr(\ad v\ad w)
$$
for $v,w\in\g$. Since $G$ is semisimple, we know that $B$ is non-degenerate. Since $G$ acts by conjugation, the tangent space of a point $v\in\g$ to its orbit equals $[\g,v]$. We have
$$
B(w,[\g,v])=-B([w,v],\g)
$$
for all $w,v\in\g$. So $w\perp\g v$ if and only if $[v,w]=0$. Let $h\in\g$ and suppose $[h,v]\perp\g v$. Then $h\in \ker(\ad v)^2=\ker(\ad v)$ and hence $[h,v]=0$. Hence $\g v$ is non-degenerate. Let $V_0$ be a Cartan subalgebra of $\g$. Let $v\in V_0$ be sufficiently general and let $u_1,u_2\perp\g v$. Then $V_0=C_{\g}(V_0)=C_{\g}(v)$. So we have $u_1,u_2\in V_0$ and hence $u_1\perp \g u_2$.

\subsection{Standard representations of groups of type $B$ and $D$}

Let $n$ be a positive integer and let $G$ be the orthogonal group $\OO(n)$ acting on $\CC^n$ with the standard form. Let $V_0$ be the subspace of $V$ spanned by the first basis vector $e_1$. For all $v\in V_0$ non-zero, we have $\g v=\{Ae_1|A\in\gl_n\mbox{ skew-symmetric}\}=\span(e_2,\dots,e_n)=V_0^{\perp}$.

\subsection{Representations of groups of type $B$ and $D$ of highest weight $2\lambda_1$} 

Let $n$ be a positive integer and let $G$ be the orthogonal group $\SO(n)$ acting on the vector space $V$ of symmetric $n\times n$ matrices with trace zero by conjugation. The bilinear form given by
$$
\langle A|B\rangle=\Tr(A^TB)
$$
for $A,B\in V$ is non-degenerate and $\SO(n)$-invariant. Let $V_0$ be the subspace of $V$ consisting of all diagonal matrices. For all $D\in V_0$ with pairwise distinct entries on the diagonal, we have $\g\cdot D=\{AD-DA|A\in\gl_n\mbox{ skew-symmetric}\}=V_0^{\perp}$.

\subsection{Tensor products of two standard representations of groups of type $B$ and $D$} 

Let $n\leq m$ be positive integers and let $G$ be the group $\OO(n)\times\OO(m)$ acting on $n\times m$ matrices by left and right multiplication. The bilinear form given by
$$
\langle A|B\rangle=\Tr(A^TB)
$$
for $A,B\in \CC^{n\times m}$ is non-degenerate and $G$-invariant. The subspace $V_0$ of $\CC^{n\times m}$ consisting of diagonal matrices satisfies the conditions of the Main Theorem.

\begin{re}
Consider the matrix $v=(I_n~0)\in\CC^{n\times m}$. The stabilizer of $v$ equals
$$
G_v=\left\{\left.\left(g,\begin{pmatrix}g&0\\0&h\end{pmatrix} \right)\right|g\in\OO(n),h\in\OO(m-n)\right\}
$$
and the orthogonal complement of $\g v$ equals the set of matrices of the form $(A~0)$ where $A$ is a symmetric $n\times n$ matrix. Ignoring the trivial action from $\OO(m-n)$, we see that the slice representation of the element $v$ is the direct sum of the representation from the previous subsection and the trivial representation.
\end{re}

\subsection{Second alternating powers of standard representations of groups of type $C$}

Let $n$ be a positive integer and let $G$ be the symplectic group 
$$
\Sp(n)=\left\{A\in\GL_{2n}\left|A\begin{pmatrix}0&I_n\\-I_n&0\end{pmatrix}A^T=\begin{pmatrix}0&I_n\\-I_n&0\end{pmatrix}\right.\right\}
$$
acting on the second alternating power $\Lambda^2 \CC^{2n}$ of the standard representation. The Lie algebra of $\Sp(n)$ equals
\begin{eqnarray*}
\sp(n)&=&\left\{A\in\gl_{2n}\left|A\begin{pmatrix}0&I_n\\-I_n&0\end{pmatrix}+\begin{pmatrix}0&I_n\\-I_n&0\end{pmatrix}A^T\right.\right\}\\
&=&\left\{\left.\begin{pmatrix}X&Y\\Z&\Theta\end{pmatrix}\in\gl_{2n}\right|\begin{array}{c}Y=Y^T,~Z=Z^T\\X+\Theta^T=0\end{array} \right\}.
\end{eqnarray*}
The $\Sp(n)$-invariant skew-symmetric form on $\CC^{2n}$ induces the bilinear form on $\Lambda^2\CC^{2n}$ given by
$$
\langle v\land w|x\land y\rangle = v^T\begin{pmatrix}0&I_n\\-I_n&0\end{pmatrix}x\cdot w^T\begin{pmatrix}0&I_n\\-I_n&0\end{pmatrix}y-v^T\begin{pmatrix}0&I_n\\-I_n&0\end{pmatrix}y\cdot w^T\begin{pmatrix}0&I_n\\-I_n&0\end{pmatrix}x
$$
for $v,w,x,y\in\CC^{2n}$. This form is symmetric, non-degenerate and $\Sp(2n)$-invariant. Let $V_0$ be the subspace of $\Lambda^2\CC^{2n}$ spanned by $e_i\land e_{n+i}$ for $i=1,\dots,n$. Then for any linear combination $v$ of $e_1\land e_{n+1},\dots,e_n\land e_{2n}$ with only non-zero coefficients, the vector space $\Lambda^2\CC^{2n}$ is the orthogonal direct sum of $V_0$ and $\g v$.

\begin{re}
The paper \cite{dadok} tells us that $\Lambda^2\CC^{2n}$ is isomorphic to $\gl_{2n}/\sp(n)$ acted on by $\Sp(n)$ by conjugation. In this case, the subspace $V_0$ of consists of matrices of the form
$$
\begin{pmatrix}D&0\\0&D\end{pmatrix}
$$
with $D\in\gl_n$ diagonal.
\end{re}

\subsection{Tensor products of two standard representations of groups of type $C$} 

Let $n\leq m$ be positive integers and let $G$ be the group $\Sp(n)\times\Sp(m)$ acting on $2n\times 2m$ matrices by left and right multiplication. The bilinear form is defined by
$$
\langle A|B\rangle=\Tr\left(\begin{pmatrix}0&I_n\\-I_n&0\end{pmatrix}A\begin{pmatrix}0&I_m\\-I_m&0\end{pmatrix}B^T\right)
$$
for all $A,B\in\CC^{2n\times 2m}$. This form is symmetric, non-degenerate and $G$-invariant. Let $V_0$ be the subspace of $V$ consisting of matrices of the form
$$
\begin{pmatrix}D&0&0&0\\0&0&D&0\end{pmatrix}
$$
where $D$ is a diagonal $n\times n$ matrix. Then for every invertible diagonal $n\times n$ matrix $D$ whose squares of diagonal entries are pairwise distinct, the vector space $\CC^{2n\times 2m}$ is the orthogonal direct sum of $V_0$ and
$$
\g\cdot \begin{pmatrix}D&0&0&0\\0&0&D&0\end{pmatrix}=\sp(n) \begin{pmatrix}D&0&0&0\\0&0&D&0\end{pmatrix}+ \begin{pmatrix}D&0&0&0\\0&0&D&0\end{pmatrix}\sp(m).
$$

\begin{re}
The slice representation of
$$
\begin{pmatrix}I_n&0&0&0\\0&0&I_n&0\end{pmatrix}
$$
is a representation of $\Sp(n)\times\Sp(m-n)$ where the second factor acts trivially. Ignoring this factor, the slice representation is isomorphic to the representation $\gl_{2n}/\sp(n)$ from the previous remark.
\end{re}

\subsection{Direct sums of standard representations of groups of type $A$ and their duals} 

Let $n$ be a positive integer and let $G$ be the group $\SL_n$ acting on $\CC^n\oplus\CC^n$ by 
$$
g\cdot(v,w)=(gv,g^{-T}w)
$$
for all $g\in\SL_n$ and $v,w\in\CC^n$. Let the bilinear form be given by
$$
\langle (v,w)|(x,y)\rangle=v^Ty+x^Tw.
$$
for all $v,w,x,y\in\CC^n$. This form is symmetric, non-degenerate and $\SL_n$-invariant. Let $V_0$ be the subspace of $\CC^n\oplus\CC^n$ spanned by $(e_1,e_1)$. Then $\CC^n\oplus\CC^n$ is the orthogonal direct sum of $V_0$ and $\g\cdot v$ for all non-zero $v\in V_0$.

\subsection{Direct sums of representations of groups of type $A$ of highest weight $2\lambda_1$ and their duals} 

Let $n$ be a positive integer and let $G$ be the group $\GL_n$ acting on the vector space $V$ of pairs of symmetric $n\times n$ matrices by $g\cdot (A,B)=(gAg^T,g^{-T}Bg^{-1})$ for all $g\in\GL_n$ and $(A,B)\in V$. Let the bilinear form on $V$ be given by
$$
\langle(A,B)|(C,D)\rangle = \Tr(AD+BC)
$$
for all symmetric matrices $A,B,C,D\in\gl_n$. Let $V_0$ be the subspace
$$
\{(D,D)|D\in\gl_n\mbox{ diagonal}\}
$$
of $V$. Then for every invertible diagonal $n\times n$ matrix $D$ whose squares of diagonal entries are pairwise distinct, the vector space $V$ is the orthogonal direct sum of $V_0$ and
$$
\g\cdot (D,D)=\{(AD+DA^T,-A^TD-DA)|A\in\gl_n\}.
$$

\begin{re}
The slice representation of $(I_n,I_n)$ is isomorphic to the set of symmetric $n\times n$ matrices acted on by $\OO_n$ with conjugation.
\end{re}

\subsection{Direct sums of representations of groups of type $A$ of highest weight $\lambda_2$ and their duals} 

Let $n$ be a positive integer and let $G$ be the group $\GL_n$ acting on the vector space $V$ of pairs of skew-symmetric $n\times n$ matrices by $g\cdot (A,B)=(gAg^T,g^{-T}Bg^{-1})$ for all $g\in\GL_n$ and $(A,B)\in V$. Let the bilinear form on $V$ be given by
$$
\langle(A,B)|(C,D)\rangle = \Tr(AD+BC)
$$
for all skew-symmetric matrices $A,B,C,D\in\gl_n$. Let $V_0$ be the subspace
$$
\left\{\left.\left(\begin{pmatrix}0&E\\-E&0\end{pmatrix},\begin{pmatrix}0&E\\-E&0\end{pmatrix}\right)\right|E\in\gl_{n/2}\mbox{ diagonal}\right\}
$$
of $V$ if $n$ is even and the subspace
$$
\left\{\left.\left(\begin{pmatrix}0&0&E\\0&0&0\\-E&0&0\end{pmatrix},\begin{pmatrix}0&0&E\\0&0&0\\-E&0&0\end{pmatrix}\right)\right|E\in\gl_{(n-1)/2}\mbox{ diagonal}\right\}
$$
of $V$ if $n$ is odd. Then for every invertible diagonal $\lfloor n/2\rfloor \times \lfloor n/2\rfloor$ matrix $E$ whose squares of diagonal entries are pairwise distinct, the vector space $V$ is the orthogonal direct sum of $V_0$ and the tangent space at the corresponding element of $V_0$ to its orbit.

\begin{re}
Suppose $n$ is even. Then the slice representation of
$$
\begin{pmatrix}0&I_{n/2}\\-I_{n/2}&0\end{pmatrix}
$$
is isomorphic to the representation $\Lambda^2\CC^n$ of $\Sp(n/2)$.
\end{re}

\subsection{Direct sums of standard representations of groups of type $C$ and their duals}

Let $n$ be a positive integer and let $G$ be the group $\Sp(n)$ acting on the vectorspace $\CC^{2n}\oplus\CC^{2n}$ with the form given by
$$
\langle(v,w)|(x,y)\rangle = v^T\begin{pmatrix}o&I_n\\-I_n&0\end{pmatrix}y+x^T\begin{pmatrix}o&I_n\\-I_n&0\end{pmatrix}w
$$
for all $v,w,x,y\in\CC^{2n}$. Let $V_0$ be the subspace of $\CC^{2n}\oplus\CC^{2n}$ spanned by some $(v,w)$ with $v_i,w_i\neq0$ for all $i$ and $v_iw_j-v_jw_i\neq0$ for all $i\neq j$. Then it follows from the following lemma that $\CC^{2n}\oplus\CC^{2n}$ is the orthogonal direct sum of $V_0$ and the tangent space at any non-zero element $V_0$ of its orbit.

\begin{lm}
Let $v,w,x,y\in\CC^m$ be such that $v_i,w_i\neq0$ for all $i$ and $v_iw_j\neq v_jw_i$ for all $i\neq j$. Then $v^TSy=x^TSw$ for all symmetric $m\times m$ matrices $S$ if and only if $(x,y)=\lambda(v,w)$ for some $\lambda\in\CC$.
\end{lm}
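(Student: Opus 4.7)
The plan is to rephrase the condition ``$v^T S y = x^T S w$ for all symmetric $S$'' as a single symmetric-matrix equation, and then read off the conclusion coordinate by coordinate using the genericity hypothesis on $v,w$.

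First I would observe that, for any vectors $p,q \in \CC^m$ and any symmetric matrix $S$, we have $p^T S q = \Tr(S\, q p^T) = \tfrac{1}{2}\Tr(S(pq^T + qp^T))$, since only the symmetric part of $qp^T$ pairs nontrivially with $S$. Because the bilinear pairing $(S,M) \mapsto \Tr(SM)$ is non-degenerate on the space of symmetric matrices, the condition ``$v^T S y = x^T S w$ for all symmetric $S$'' is equivalent to the matrix identity
$$
v y^T + y v^T = x w^T + w x^T.
$$
The ``if'' direction of the lemma is then immediate: if $(x,y) = \lambda(v,w)$, both sides equal $\lambda(v w^T + w v^T)$.

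For the ``only if'' direction, I would compare entries. The $(i,i)$-entry gives $2 v_i y_i = 2 x_i w_i$, i.e.\ $v_i y_i = x_i w_i$. Since $v_i,w_i \neq 0$, the scalar
$$
\lambda_i := \frac{x_i}{v_i} = \frac{y_i}{w_i}
$$
is well-defined, and satisfies $x_i = \lambda_i v_i$ and $y_i = \lambda_i w_i$. For $i\neq j$, the $(i,j)$-entry yields $v_i y_j + y_i v_j = x_i w_j + w_i x_j$; substituting the relations just derived gives
$$
\lambda_j (v_i w_j - v_j w_i) = \lambda_i (v_i w_j - v_j w_i).
$$
By the hypothesis $v_i w_j \neq v_j w_i$ for $i\neq j$, we may cancel and conclude $\lambda_i = \lambda_j$. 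Hence all $\lambda_i$ coincide with a common value $\lambda$, and $(x,y) = \lambda(v,w)$ as required.

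There is no real obstacle here; the only mildly delicate step is the initial reduction, which hinges on the nondegeneracy of the trace pairing restricted to symmetric matrices. Once that reduction is in place, the genericity conditions $v_i,w_i\neq 0$ and $v_iw_j\neq v_jw_i$ are used precisely to define and to equate the proportionality constants $\lambda_i$.
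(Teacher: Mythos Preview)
Your proof is correct. The paper actually states this lemma without proof, so there is nothing to compare against; your argument via the symmetric-matrix identity $vy^T+yv^T=xw^T+wx^T$ and the entrywise analysis is exactly the kind of direct verification the authors presumably had in mind and left to the reader.
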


\subsection{Tensor products of two direct sums of standard representations of groups of type $A$ and their duals}

Let $n\leq m$ be positive integers and let $\GL_n\times\GL_m$ act on $\CC^{n\times m}\oplus\CC^{n\times m}$ by
$$
(g,h)(A,B)=\left(gAh^T,g^{-T}Bh^{-1}\right)
$$
for all $g\in\GL_n$, $h\in\GL_m$ and $A,B\in\CC^{n\times m}$. Let the bilinear form be given by
$$
\langle (A,B)|(C,D)\rangle = \Tr\left(A^TD+C^TB\right)
$$
for all $A,B,C,D\in\CC^{n\times m}$. Let $V_0$ be the subspace 
$$
\left\{\left((D~0),(D~0)\right)\left|D\in\gl_n\mbox{diagonal}\right.\right\}
$$
of $V$. Then for all invertible diagonal $n\times n$ matrices $D$ whose squares of diagonal entries are pairwise distinct, the vector space $\CC^{n\times m}\oplus\CC^{n\times m}$ is the orthogonal direct sum of $V_0$ and
$$
\g \left((D~0),(D~0)\right)=\left\{\left.\left((AD~0)+(D~0)B^T,(-A^TD~0)-(D~0)B\right)\right|A\in\gl_n,B\in\gl_m\right\}.
$$

\begin{re}
The slice representation of the pair $((I_n~0),(I_n~0))$ is a representation of $\GL_n\times\GL_m$ where the second factor acts trivially. Ignoring this factor, we get the adjoint representation of $\GL_n$. 
\end{re}

\end{document}